\newtheorem{remark}{Remark}
\newcommand{\COLORON}{1}
\newcommand{\NOTESON}{0}
\newcommand{\Debug}{0}
\newcommand{\mue}{\ensuremath{<}-universal element}
\newcommand{\kriz}{K\v{r}\`i\v{z}}
\newcommand{\forb}[1]{\mathrm{Forb}(#1)}
\newcommand{\ex}[1]{\mathrm{Ex}(#1)}
\newcommand{\cof}{co-finite}
\newcommand{\Cof}{Co-finite}
\newcommand{\finy}{co-finite}
\renewcommand{\theenumi}{(\roman{enumi})}
\newcommand{\comment}[1]{}
\newcommand{\COMMENT}[1]{}
\definecolor{darkgray}{rgb}{0.3,0.3,0.3}
\newcommand{\defi}[1]{{\color{darkgray}\emph{#1}}}
\newtheorem{proposition}{Proposition}[section]
\newtheorem{definition}[proposition]{Definition}
\newtheorem{theorem}[proposition]{Theorem}
\newtheorem{corollary}[proposition]{Corollary}
\newtheorem{lemma}[proposition]{Lemma}
\newtheorem{observation}[proposition]{Observation}
\newtheorem{conjecture}{{Conjecture}}[section]
\newtheorem{problem}[conjecture]{{Problem}}
\newtheorem*{noProblem}{{Problem}}
\newtheorem{question}[conjecture]{{Question}}
\newtheorem{examp}[proposition]{Example}
\newcommand{\kreis}[1]{\mathaccent"7017\relax #1}
\newcommand{\FIG}{0}
\newcommand{\note}[1]{ 

\hspace*{-30pt}
	{\color{blue}  NOTE: \color{Turquoise}{\small  \tt \begin{minipage}[c]{1.1\textwidth}  #1 \end{minipage} \ignorespacesafterend }} 
	
	}
\else \newcommand{\note}[1]{} \fi
\newcommand{\afsubm}[1]{ \ifnum \Debug = 1 {\mymargin{#1}}
\fi} 
\newcommand{\fig}[1]{Figure ``{#1}''}
\else \newcommand{\fig}[1]{Figure~\ref{#1}} \fi
\renewcommand{\color}[1]{}
\newcommand{\N}{\ensuremath{\mathbb N}}
\newcommand{\R}{\ensuremath{\mathbb R}}
\newcommand{\Q}{\ensuremath{\mathbb Q}}
\newcommand{\BS}{\ensuremath{\mathbb S}}
\newcommand{\cc}{\ensuremath{\mathcal C}}
\newcommand{\ce}{\ensuremath{\mathcal E}}
\newcommand{\cf}{\ensuremath{\mathcal F}}
\newcommand{\ct}{\ensuremath{\mathcal T}}
\newcommand{\cu}{\ensuremath{\mathcal U}}
\newcommand{\oo}{\ensuremath{\omega}}
\newcommand{\sig}{\ensuremath{\sigma}}
\newcommand{\sm}{\backslash}
\newcommand{\restr}{\upharpoonright}
\newcommand{\isom}{\cong}
\newcommand{\cls}[1]{\ensuremath{\overline{#1}}}
\DeclareRobustCommand{\cev}[1]{%
  \mathpalette\do@cev{#1}%
}
\newcommand{\do@cev}[2]{%
  \fix@cev{#1}{+}%
  \reflectbox{$\m@th#1\vec{\reflectbox{$\fix@cev{#1}{-}\m@th#1#2\fix@cev{#1}{+}$}}$}%
  \fix@cev{#1}{-}%
}
\newcommand{\fix@cev}[2]{%
  \ifx#1\displaystyle
    \mkern#23mu
  \else
    \ifx#1\textstyle
      \mkern#23mu
    \else
      \ifx#1\scriptstyle
        \mkern#22mu
      \else
        \mkern#22mu
      \fi
    \fi
  \fi
}
\newcommand{\nin}{\ensuremath{{n\in\N}}}
\newcommand{\pth}[2]{\ensuremath{#1}\text{--}\ensuremath{#2}~path}
\newcommand{\seq}[1]{\ensuremath{(#1_n)_{n\in\N}}} 
\newcommand{\g}{\ensuremath{G\ }}
\newcommand{\G}{\ensuremath{G}}
\newcommand{\Lr}[1]{Lemma~\ref{#1}}
\newcommand{\Tr}[1]{Theorem~\ref{#1}}
\newcommand{\Sr}[1]{Section~\ref{#1}}
\newcommand{\Prr}[1]{Pro\-position~\ref{#1}}
\newcommand{\Cr}[1]{Corollary~\ref{#1}}
\newcommand{\Cnr}[1]{Con\-jecture~\ref{#1}}
\newcommand{\Dr}[1]{De\-fi\-nition~\ref{#1}}
\newcommand{\Qr}[1]{Question~\ref{#1}}
\newcommand{\lf}{locally finite}
\renewcommand{\iff}{if and only if}
\newcommand{\fe}{for every}
\newcommand{\st}{such that}
\newcommand{\ti}{there is}
\newcommand{\ta}{there are}
\newcommand{\obda}{without loss of generality}
\newcommand{\wrt}{with respect to}
\newcommand{\labtequ}[2]{
 \begin{equation} \label{#1} 	\begin{minipage}[c]{0.9\textwidth}  #2 \end{minipage} \ignorespacesafterend \end{equation} }
\newcommand{\mymargin}[1]{
 \ifnum \Debug = 1
  \marginpar{%
    \begin{minipage}{\marginparwidth}\small%
      \begin{flushleft}%
        {\color{blue}#1}%
      \end{flushleft}%
   \end{minipage}%
  }%
 \fi
}%
\newcommand{\mySection}[2]{}
\newcommand{\RSTc}{Robertson--Seymour theorem \cite{GMXX}}
\newcommand{\DK}{Diestel and K\"uhn}
\newcommand{\DB}{\cite{diestelBook05}}
\begin{document}
	\title{On graph classes with minor-universal elements}
	
\author{Agelos Georgakopoulos\thanks{Supported by  EPSRC grants EP/V048821/1 and EP/V009044/1.}}
\affil{  {Mathematics Institute, University of Warwick}\\
  {\small CV4 7AL, UK}}

\date{\today}

\maketitle

\begin{abstract}
A graph $U$ is universal for a graph class $\mathcal{C}\ni U$, if every $G\in \mathcal{C}$ is a minor of $U$. We prove the existence or absence of universal graphs in several natural graph classes, including graphs component-wise embeddable into a surface, and graphs forbidding $K_5$, or $K_{3,3}$, or $K_\infty$ as a minor. We prove the existence of uncountably many minor-closed classes of countable graphs that (do and) do not have a universal element. 

Some of our results and questions may be of interest to the finite graph theorist. In particular, one of our side-results is that every $K_5$-minor-free graph is a minor of a $K_5$-minor-free graph of maximum degree 22.
\end{abstract}

\medskip

{\noindent \bf{Keywords:} \small Universal graph, infinite graph, minor-closed, excluded minor, graphs in surfaces,} 

\smallskip
{\noindent \small  \bf{MSC 2020 Classification:}} 05C99, 05C83, 05C10. \\

\section{Introduction}

Given a binary relation $\prec$ between graphs ---typically the (induced) subgraph or minor relation--- we say that a graph $U$ is $\prec$-universal for a graph class $\mathcal{C}\ni U$, if $G \prec U$ holds for every $G\in \mathcal{C}$. (It is often, but not always, the case that $\cc$ is closed under $\prec$.)

The study of universal graphs is one of the oldest topics of infinite graph theory. The best known instance is the 
Rado graph \cite{RadUni}, an induced-subgraph-universal countable graph. 
Diestel, Halin \& Vogler \cite{DiHaVoSom} proved that there is no subgraph-universal graph in the class of countable graphs with no $K_n$ minor for $5 \leq n \leq \aleph_0$, but there is one for $n\leq 4$. 

Pach \cite{PacPro} proved that there is no subgraph-universal planar graph. Huynh et al.\ \cite{HMSTW} recently strengthened this by showing that if $G$ contains every countable planar graph as a subgraph then it must have the infinite complete graph $K_\infty$ as a minor. Lehner \cite{LehUni} strengthened this further by showing that the same conclusion holds if \G\  contains every countable planar graph as a topological minor.

For more background about universal graphs we refer to e.g.\ \cite{Henson, KoMePaSom, KriUni, KuhMin} and the survey \cite{KomPacUni}.

\medskip
This paper studies $<$-universal graphs, where $<$ stands for the minor relation, with an emphasis on interactions with the finite graph minor theory. Despite Pach's result and its aforementioned strengthenings, \DK\ proved  
\begin{theorem}[\cite{DieKuhUni}] \label{thm DK}
There is a $<$-universal countable planar graph.
\end{theorem}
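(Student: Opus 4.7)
The plan is to exhibit one explicit countable planar graph $U$ and show that every countable planar graph $G$ is its minor. A natural choice is the infinite square grid $\Z\times\Z$ (or, equivalently for the present purpose, the triangular or hexagonal lattice), which is connected, locally finite, and planar.

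First I would invoke a proper-embedding theorem: every countable planar graph $G$ admits a plane embedding $\iota\colon G\hookrightarrow\R^2$ such that every compact subset of $\R^2$ meets the images of only finitely many vertices and edges of $G$. For finite planar graphs this is F\'ary's theorem; the extension to countable planar graphs is classical and can be obtained by building compatible straight-line embeddings along an exhaustion $G_1\subseteq G_2\subseteq\cdots$ of $G$ by finite subgraphs, combined with a compactness argument.

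Next I would use $\iota$ to construct a minor model of $G$ inside $U$. Enumerate $V(G)=\{v_1,v_2,\ldots\}$ and $E(G)=\{e_1,e_2,\ldots\}$, and inductively assign to each $v_i$ a finite connected subgraph $S_{v_i}\subseteq U$ lying in a small disk around $\iota(v_i)$, and to each edge $e_j=uv$ a path $P_{e_j}\subseteq U$ from $S_u$ to $S_v$ that follows the curve $\iota(e_j)$ closely. Since the grid is dense in the plane, any simple arc can be approximated up to small perturbation by a grid path, and any small disk contains arbitrarily large connected sub-grids; this makes each local step achievable. Contracting each $S_v$ then recovers $G$ as a minor of $U$.

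The main obstacle is carrying out the induction so that the branch sets $S_v$ and edge-paths $P_e$ are pairwise disjoint, except where an edge-path meets the branch sets of its endpoints. This is where local finiteness of $\iota$ becomes decisive: inside any bounded region of the plane only finitely many vertices and edges of $G$ appear, so only finitely many previously placed branch sets and paths need to be avoided, and the lattice remains rich enough there to route the next piece. The construction therefore reduces to careful book-keeping, organised along an exhaustion of $\R^2$ by compact disks $D_1\subseteq D_2\subseteq\cdots$: process all vertices and edges whose images lie in $D_n$ before moving on to $D_{n+1}$, while using only a thin subsublattice of the grid inside $D_n$ so that room is reserved for edges that may later need to cross through. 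No deeper difficulty beyond such room-leaving estimates is expected to arise.
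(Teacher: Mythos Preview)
Your proposal breaks down at the very first step. The ``proper-embedding theorem'' you invoke --- that every countable planar graph can be embedded in $\R^2$ with each compact set meeting only finitely many vertices and edges --- is false. Any planar graph with a vertex of infinite degree is an immediate counterexample: every neighbourhood of that vertex meets infinitely many edge-images, so already the star $K_{1,\omega}$ fails. Even after one passes to a locally finite supergraph via \Lr{lem lf}, the claim still fails. Take the two-way infinite triangular cylinder $C_3\times\Z$ (triangles $a_nb_nc_n$ for each $n\in\Z$, together with edges $a_na_{n+1},\,b_nb_{n+1},\,c_nc_{n+1}$): it is 3-connected, so by Whitney-type uniqueness its planar embedding is essentially determined, and in that embedding the triangles $T_n$ are nested and must accumulate at a point of $\R^2$ in one of the two directions.

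This cannot be repaired by more careful routing, because the target graph itself is wrong: $\Z\times\Z$ is not a \mue\ for the countable planar graphs. The graphs admitting an accumulation-free embedding in $\R^2$ form a \emph{proper} minor-closed subclass of the planar graphs, the one singled out in K\"uhn's work \cite{KuhMin} cited in the introduction; your argument, even if it went through, would only ever reach graphs in that subclass. (A smaller symptom of the same problem: the grid has a fixed unit mesh and is not ``dense in the plane'', so approximation of an embedding at arbitrarily fine scale is simply unavailable.) Note also that \Tr{thm DK} is quoted here from \cite{DieKuhUni} and not reproved; the Diestel--K\"uhn construction of a genuine universal planar graph is substantially more involved than the grid, for exactly the reason above.
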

\Tr{thm DK} provides motivation as well an important tool for this paper. Other natural minor-closed classes known so far to have minor-universal elements are forests, series-parallel graphs \cite{DiHaVoSom}, outer-planar graphs \cite{DieKuhUni}, and graphs with accumulation-free embeddings in $\R^2$ \cite{KuhMin}. Diestel \& K\"uhn \cite{DieKuhUni} asked 
\begin{question} \label{Q DK}
Which minor-closed graph classes have a countable \mue?
\end{question}

They specifically discussed the special cases of Question~\ref{Q DK} for the classes $Forb(K_5)$ and $Forb(K_\infty)$.  This paper settles these two cases of Question~\ref{Q DK}, as well as many more. Our first result is easy to explain, and it adapts a well-known idea (\cite[Proposition 3.2.]{KomPacUni}):

\begin{theorem}\label{Kinfty}
The class of countable $K_\infty$-minor-free graphs has no \mue.
\end{theorem}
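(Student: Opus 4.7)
The plan is to argue by contradiction via a K\"onig's Infinity Lemma compactness argument, adapting the well-known scheme of \cite[Proposition 3.2]{KomPacUni}. Suppose that $U$ is a countable $K_\infty$-minor-free graph that is $<$-universal in its class; the aim is then to derive a $K_\infty$-minor in $U$, contradicting the assumption.

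Enumerate the finite connected subgraphs of $U$ as $S_1,S_2,\dots$ (countably many, since $U$ itself is countable). I build a rooted tree $\mathcal T$ whose nodes at depth $k$ are tuples $(S_{i_1},\dots,S_{i_k})$ with $i_1<\dots<i_k\le f(k)$ such that the $S_{i_j}$ are pairwise disjoint and pairwise adjacent in $U$; here $f:\mathbb N\to\mathbb N$ is a non-decreasing function to be chosen, and the parent of a node is obtained by dropping its last entry. Since $f(k+1)$ is finite, at each node there are only finitely many candidates for the next entry, so $\mathcal T$ is finitely branching. Provided $\mathcal T$ has a node at every depth, K\"onig's Infinity Lemma produces an infinite branch whose entries $(S_{i_j})_{j\in\mathbb N}$ are pairwise disjoint and pairwise adjacent finite connected subgraphs of $U$, i.e.\ form the branch sets of a $K_\infty$-minor of $U$ -- the desired contradiction.

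The crux is therefore to exhibit a single $f$ guaranteeing nodes at every depth, and this is where $<$-universality enters. Since $\bigsqcup_{n\ge 1} K_n$ is a countable disjoint union of finite graphs and hence $K_\infty$-minor-free, it is a minor of $U$; reading off that embedding supplies for every $k$ a $K_k$-minor of $U$ whose $k$ branch sets lie among finitely many of the $S_j$, say with indices bounded by some $M_k$. Taking $f(k)\ge M_k$ then produces at least one depth-$k$ node in $\mathcal T$.

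The main obstacle I anticipate is to arrange the choices so that the nodes produced at successive depths fit together into a single tree -- i.e.\ that the initial segments of a depth-$(k+1)$ node are themselves nodes at the corresponding lower depths. The disjoint $K_k$-minors coming from a single embedding of $\bigsqcup K_n$ need not nest in this way, and without nesting one has depth-$k$ nodes but possibly no root-to-infinity chain. To fix this I plan to replace $\bigsqcup K_n$ by a richer $K_\infty$-minor-free template -- roughly, a ``tree of cliques'' in which a coarse spine forces the $K_k$-minors of successive sizes to be realised in a nested fashion inside $U$ -- and then appeal to universality. Once such a template has been embedded in $U$, the compatibility between levels becomes automatic and the K\"onig argument closes the proof.
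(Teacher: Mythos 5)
Your compactness framing is a genuinely different route from the paper, which avoids K\"onig's lemma entirely: it repeatedly applies universality to the cone $C(U) = U + v^*$ over $U$ itself, peels off the $v^*$-branch set $B_0$, observes that $U \setminus B_0$ is still universal (since the $U$-part of the $C(U)$-minor lives there), and iterates to produce $B_0, B_1, \dots$ with each $B_i$ adjacent to all later $B_j$ because it plays the role of the apex vertex. That argument is a diagonalization: it uses infinitely many successive applications of universality, each to a graph built from $U$, so nesting is automatic by construction.

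The gap in your proposal is exactly the one you flag, and I do not think your proposed fix can close it. As written the tuples $(S_{i_1},\dots,S_{i_k})$ with $i_k\le f(k)$ do not form a K\"onig-compatible tree: the prefix of a depth-$k$ node need only satisfy $i_{k-1}\le f(k)$, not $i_{k-1}\le f(k-1)$, so the parent may fail to be a node, and merely having nodes at every depth is then not enough. To repair this you want the depth-$k$ witnesses to nest, and you hope to force nesting by embedding a richer $K_\infty$-minor-free template $T$. But whatever $T$ you pick, the subgraph $H\subseteq U$ formed by the branch sets and branch edges of a $T$-minor is itself a subgraph of $U$, hence $K_\infty$-minor-free by hypothesis; and a nested chain of $K_k$-minors for all $k$ inside $H$ would be a $K_\infty$-minor of $H$. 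So no single fixed template, applied once via universality, can force the nesting you need -- the only adjacencies the minor realization certifies are those present in $T$, and a $K_\infty$-minor-free $T$ does not certify enough. (For instance, one cannot take $T = U$ plus infinitely many mutually nonadjacent apex vertices: that graph fails to be $K_\infty$-minor-free even when $U$ is a ray.) The essential trick you are missing is to use the cone $C(U)$, whose apex is adjacent to \emph{all} of $U$, and to apply universality \emph{repeatedly} so that each new apex-branch-set is adjacent to everything chosen before and after; that coordination across infinitely many applications is what the single-template-plus-K\"onig scheme cannot supply.
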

\begin{proof}
Suppose $U$ is a universal $K_\infty$-minor-free graph. Add a new vertex $v^*$, and join it with an edge to each vertex of $U$ to obtain a new graph $C(U)$. Notice that $C(U)$ is still  $K_\infty$-minor-free, and therefore we can find $C(U)$ as a minor $M$ of $U$. Let $B_0$ be the branch set of such a minor corresponding to $v^*$. Thus $U$ is a minor of $U \sm B_0$, hence the latter is still universal. Repeat the argument to find a minor of $U \sm B_0$ isomorphic to $C(U)$ every branch set of which contains one of $M$, and let $B_1$ be the new branch set corresponding to $v^*$. Repeat ad infinitum to define \seq{B}, and notice that each $B_i$ sends an edge to all $B_j, j>i$. Thus the $B_i$ form the branch sets of a $K_\infty$ minor in $U$, which is a contradiction.
\end{proof}

We remark that the class of $K_\infty$-minor-free graphs had attracted a lot of attention, and alternative characterizations are known \cite{DieThoExc,RoSeThExcI,RoSeThExcII}.

A second look at the above proof reveals that the choice of the forbidden minor $K_\infty$ is not too important; the same argument can be applied if we replace $K_\infty$ by any graph \g that is a minor of $G - v$ \fe\ $v\in V(G)$. This yields 

\begin{corollary}\label{cor}
Let \g be a graph such that $G< G - v$ holds \fe\ $v\in V(G)$. Then the class of $G$-minor-free graphs has no \mue. \qed
\end{corollary}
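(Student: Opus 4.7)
The plan is to follow the proof of \Tr{Kinfty} essentially verbatim, substituting the hypothesis $G < G - v$ in the one place where that proof used the (stronger) equality $K_\infty = K_\infty - v$. I would assume for contradiction that $U$ is a \mue\ of the class of $G$-minor-free graphs, and form $C(U)$ by adjoining an apex vertex $v^*$ adjacent to every vertex of $U$.

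The only step requiring new input is verifying that $C(U)$ is still $G$-minor-free; this is the step I expect to be the main obstacle, since it is precisely where the hypothesis enters. Suppose otherwise: there is a $G$-minor of $C(U)$ with branch sets $\{B_w : w \in V(G)\}$, and $v^*$ lies in some $B_{w_0}$. The remaining branch sets $\{B_w : w \neq w_0\}$ lie entirely in $V(U)$, and every edge of $G - w_0$ is witnessed by an edge of $U$ between two such branch sets, since any edge of $C(U)$ avoiding $v^*$ is already an edge of $U$. Hence $G - w_0 < U$, and applying the hypothesis $G < G - w_0$ yields $G < U$, contradicting the choice of $U$. The case $G = K_\infty$ of \Tr{Kinfty} is the limiting instance in which $G - w$ is already isomorphic to $G$.

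From here I would proceed exactly as in the proof of \Tr{Kinfty}. Universality of $U$ produces a $C(U)$-minor $M$ in $U$; letting $B_0$ be the branch set of $v^*$, the remaining branch sets realize $C(U) - v^* \cong U$ as a minor of $U - B_0$, so $U - B_0$ is itself a \mue. Iterating, while insisting at each stage that the new branch sets refine those of the previous stage, one obtains a sequence $B_0, B_1, \ldots \subseteq V(U)$ of pairwise disjoint connected vertex sets with an edge between $B_i$ and $B_j$ whenever $i \neq j$. These are the branch sets of a $K_\infty$-minor in $U$, and since any countable graph -- in particular $G$ -- is a minor of $K_\infty$, this gives $G < U$, the desired contradiction.
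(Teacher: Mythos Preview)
Your proof is correct and follows exactly the approach the paper intends: the paper gives no separate argument for the corollary, merely observing that the proof of \Tr{Kinfty} goes through once one checks that $C(U)$ is still $G$-minor-free, and this is precisely the step where you correctly invoke the hypothesis $G < G - v$. (One trivial case you leave implicit: if $v^*$ lies in no branch set of a putative $G$-minor of $C(U)$, then that minor already lives in $U$ and $G < U$ is immediate.)
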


Examples of such \g include the ray, the half- and full-grid, and any graph which is a disjoint union of infinitely many copies of a fixed graph $H$.  Letting \g be the ray, we thus strengthen a result of 
Diestel, Halin \& Vogler \cite[Theorem~4.1]{DiHaVoSom} saying that there is no subgraph-universal graph in the class of countable rayless  graphs. 

\medskip
C.~Thomassen observed that for every closed surface $\Sigma$ other than $\BS^2$, the class $\ce_\Sigma$ of countable graphs that embed in $\Sigma$ has no \mue\ \cite[\S~6]{DieKuhUni}. Our next result shows that the reason for this lies in the lack of addability of $\ce_\Sigma$:

\begin{theorem} \label{thm CS Intro}
Let $\Sigma$ be a closed orientable surface, and let $\cc_\Sigma$ be the class of countable graphs $G$ \st\ every component of \g embeds in $\Sigma$. Then $\cc_\Sigma$ has a \mue. 
\end{theorem}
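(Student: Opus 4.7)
The plan is to reduce to \Tr{thm DK} via a handle-attachment construction. First, I would reduce to the connected case: it suffices to construct a countable connected graph $U^c_\Sigma$, embeddable in $\Sigma$, that is $<$-universal for the class of connected countable graphs embeddable in $\Sigma$; for then $U_\Sigma := \bigsqcup_{i \in \N} U^c_\Sigma$ lies in $\cc_\Sigma$ (each component being embeddable in $\Sigma$), and every $G \in \cc_\Sigma$ is a minor of $U_\Sigma$ by realizing each of its (at most countably many) connected components as a minor of a separate copy of $U^c_\Sigma$.

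Second, to construct $U^c_\Sigma$, let $g$ denote the orientable genus of $\Sigma$, so that $\Sigma$ is obtained from $\BS^2$ by attaching $g$ handles. Starting from the universal countable planar graph $U$ of \Tr{thm DK}, I would attach $g$ \emph{graph-theoretic handles}: pick $2g$ pairwise disjoint small subgraphs $A_1, B_1, \ldots, A_g, B_g$ in $U$ (whose existence follows from the universality of $U$), and for each $i\le g$ add extra vertices and edges between $A_i$ and $B_i$ routed along one topological handle of $\Sigma$. The resulting graph $U^c_\Sigma$ embeds in $\Sigma$ by construction.

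Third, to verify universality: given a connected countable graph $G$ embedded in $\Sigma$, choose $g$ disjoint non-separating simple closed curves in $\Sigma$ meeting $G$ transversally (after subdividing $G$) so as to induce cycles $C_1, \ldots, C_g$ in $G$. Cutting $\Sigma$ along these curves yields a sphere with $2g$ holes and turns $G$ into a planar graph $G'$ with $2g$ boundary cycles $C_i^\pm$ whose pairwise identification recovers $G$. By \Tr{thm DK}, $G'$ is a minor of $U$; the handles of $U^c_\Sigma$ then supply the identifications needed to realize $G$ as a minor of $U^c_\Sigma$.

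The main obstacle is this final alignment step: the minor-embedding of $G'$ into $U$ must place the branch sets corresponding to the boundary cycles $C_i^\pm$ inside the prescribed subgraphs $A_i, B_i$. Overcoming this will likely require either a strengthening of \Tr{thm DK} to a ``marked'' version in which finitely many branch sets can be prescribed in advance, or a more elaborate $U^c_\Sigma$ built with countably many handle-configurations spread densely throughout $U$, so that a valid alignment always exists no matter where the minor copy of $G'$ sits.
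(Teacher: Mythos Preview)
Your reduction to the connected case is fine, and the instinct to leverage \Tr{thm DK} is sound. But the alignment obstacle you flag at the end is a genuine gap, and neither of your suggested fixes is close to a proof. A ``marked'' version of \Tr{thm DK} --- where finitely many branch sets are forced to land in prescribed subgraphs of $U$ --- is not known and would amount to re-proving the theorem with extra constraints. The ``dense handles'' idea has a more basic problem: after you realise $G'$ as a minor of $U$, the branch sets corresponding to your boundary cycles $C_i^\pm$ are arbitrary connected subgraphs of $U$, potentially large and spread out; a handle only identifies two fixed small pieces $A_i,B_i$, so no countable supply of handle placements will cover all possible positions and shapes of these branch sets. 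There is also a secondary issue you glossed over: for a general (not locally finite) countable $G$ embedded in $\Sigma$, a closed curve can meet the image of $G$ in infinitely many points, so your ``boundary cycles'' $C_i^\pm$ need not be finite.

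The paper's argument sidesteps the alignment problem entirely by cutting combinatorially rather than topologically. Instead of slicing $\Sigma$ along curves, it finds a \emph{finite} subgraph $H\subseteq X$ with $\gamma(H)=\gamma(X)$ (such $H$ exists since genus is determined by finite subgraphs). By Youngs' theorem every face of a minimum-genus embedding of $H$ is an open disc, so the portion of $X$ in each face is planar. The universal object is then the disjoint union, over all finite graphs $G$ embeddable in $\Sigma$, of all ways of gluing minors of the universal planar graph $P$ along the facial cycles of $G$. Because there are only countably many finite $G$ and countably many gluing choices, this is a countable graph in $\cc_\Sigma$; and because every such choice is present, the needed alignment is guaranteed by construction rather than by controlling where a minor lands. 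A fattening trick ($X\mapsto X^\otimes$) is used to ensure all relevant face boundaries are genuine cycles, and a reduction to locally finite graphs handles the embedding subtleties.
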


We prove \Tr{thm CS Intro} in \Sr{sec CS} by combining the universal planar graph of \DK\ \cite{DieKuhUni} with a classical result of Youngs \cite{Youngs} about embeddings of finite graphs. We will thereby introduce some handy tools for dealing with infinite embedded graphs, simplifying one of the ingredients of the proof of \Tr{thm DK} in \cite{DieKuhUni}.
\medskip

The other aforementioned question of \DK\ has a positive answer:
\begin{theorem} \label{thm K5 33}
The class of countable graphs with no $K_5$ minor has a \mue. So does the class of countable $K_{3,3}$-minor-free graphs. 
\end{theorem}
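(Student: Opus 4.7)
The plan is to combine Wagner's structural characterisations of the two classes with the universal countable planar graph $P$ of \Tr{thm DK}. Recall that every finite $K_5$-minor-free graph is a clique-sum of planar graphs and copies of the Wagner graph $V_8$ along cliques of size at most $3$, and every finite $K_{3,3}$-minor-free graph is a clique-sum of planar graphs and copies of $K_5$ along cliques of size at most $2$. The first step is to extend these to countable graphs, producing a tree-decomposition of any countable $K_5$- (resp.\ $K_{3,3}$-) minor-free graph whose torsos are planar or isomorphic to $V_8$ (resp.\ $K_5$) and whose adhesion sets are cliques of the appropriate bounded size. This should follow from the finite theorems by a compactness argument, or by a direct transfinite construction.

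Given such decompositions, I would construct $U$ by specifying an infinite rooted ``decomposition tree'' $T_U$ in which every node has infinitely many children of every possible type, where a type records: whether the child torso is a copy of $P$ or of $V_8$ (resp.\ $K_5$); the adhesion clique inside the child torso; and which clique of the parent torso the child is identified with. Assign a torso of the specified kind to every node of $T_U$ and perform all indicated clique-sums; call the result $U$. Then $U$ is countable, and is $K_5$-minor-free (resp.\ $K_{3,3}$-minor-free), since clique-sums along cliques of bounded size preserve the forbidden minor and every torso does.

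For universality, given a countable $K_5$-minor-free $G$ with decomposition $(T_G,(V_t)_{t\in T_G})$, I would embed $G$ as a minor of $U$ recursively: send the root of $T_G$ to the root of $T_U$ (of the right type), embed the root torso of $G$ as a minor of the root torso of $U$, and for each child of the current node in $T_G$ pick an unused child of the current node in $T_U$ whose type matches the neighbouring adhesion clique, then recurse. The same strategy handles $K_{3,3}$-minor-free graphs with $V_8$ replaced by $K_5$ and adhesion $\le 2$.

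The hard part will be the rooted minor-embedding inside each torso: one must embed the $G$-torso as a minor of the $U$-torso \emph{with all adhesion cliques mapped to prescribed cliques} of the $U$-torso. For the $V_8$ and $K_5$ torsos this follows from their (near-)homogeneity together with the fact that the type alphabet of $T_U$ already enumerates all possible choices of adhesion clique up to automorphism. For the planar torsos, this requires a rooted strengthening of \Tr{thm DK}: every finite planar graph $H$ with a finite list of distinguished cliques of size $\le 3$ is a minor of $P$ with those cliques mapped to any prescribed small cliques of $P$. I expect to obtain this either by revisiting the Diestel--K\"uhn construction so that every small clique of $P$ is the boundary of infinitely many universal planar ``gadgets'', or by a back-and-forth stage argument that refreshes every small clique of $U$ at every stage. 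The bounded-degree side-result advertised in the abstract (every $K_5$-minor-free graph is a minor of one of maximum degree $22$) should enter precisely here, to keep the branching of $T_U$ finite at each node and the whole construction countable.
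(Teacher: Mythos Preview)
Your overall architecture matches the paper's: use the \kriz--Thomas extension of Wagner to get a tree-decomposition of any countable $K_5$-minor-free $G$ into planar torsos and copies of $W$ with clique adhesion of size $\le 3$, build $U$ as an $\omega$-branching tree of copies of a universal planar graph $P$ and of $W$ glued along all possible small cliques, and embed the decomposition of $G$ into that of $U$ torso by torso. You also correctly locate the crux: when a planar torso $X$ of $G$ is embedded as a minor of $P$, an adhesion triangle of $X$ becomes only a $K_3$ \emph{minor} of $P$, whereas the clique-sums defining $U$ are performed along genuine $K_3$ \emph{subgraphs} of $P$.

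The gap is that you have not solved this crux; ``I expect to obtain this by revisiting the Diestel--K\"uhn construction'' is a hope, not an argument, and neither of your suggested routes is what the paper does. The paper's mechanism is concrete and rather different from a ``rooted strengthening'' of \Tr{thm DK}: it never tries to prescribe where the adhesion clique lands. Instead it replaces an arbitrary universal planar $P_0$ by its \emph{fattening} $P=F(P_0)$, obtained by flanking every edge with two triangles and joining consecutive tips around each vertex (\Dr{def fat}). The key lemma, property~\eqref{triang}, says that for every minimal $K_3$ minor $\Delta$ of $P_0$ and either side $R$ of its cycle, there is an actual triangle $\Delta'\subset F(P_0)$ inside $\overline R$ together with three disjoint paths from the vertices of $\Delta'$ to the three branch sets of $\Delta$. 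So one embeds the torso $X_i$ freely into a fresh copy of $P_0\subset P$, reads off the $K_3$ minor where the adhesion landed, invokes \eqref{triang} to produce a genuine triangle $\Delta'$ on the correct side (using that the torso is a $3$-simplex, so the adhesion triangle is facial), enlarges the three branch sets along those paths so that each contains a vertex of $\Delta'$, and only \emph{then} selects a child in the decomposition tree of $U$ whose attachment map identifies exactly this $\Delta'$ with the matching triangle on the parent side. The richness of $U$ (every small clique of every torso is the adhesion of infinitely many unused children of every colour) guarantees such a child exists. Your formulation ``with those cliques mapped to any prescribed small cliques of $P$'' asks for something strictly stronger and is never established.

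Two smaller corrections. The torsos of $G$ need not be finite, so ``every finite planar graph $H$'' should read ``every countable planar graph''. And the maximum-degree-$22$ statement is a \emph{consequence} of the construction (obtained afterwards in \Sr{sec Delta} by inserting pilars so that no clique is reused), not an ingredient: the decomposition tree of $U$ has degree $\omega$ throughout, and countability of $U$ follows simply because a countable tree of countable torsos is countable.
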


This is the most difficult result of this paper. We prove it in \Sr{sec K5}, using again the universal planar graph, combining it with Wagner's clique-sum decompositions of such graphs \cite{Wagner}, as extended to the infinite case by \kriz\ \& Thomas \cite{KriThoCli}.

\medskip
\Tr{Kinfty} provides the simplest proof I am aware of for the fact that \ta\ uncountably many minor-twin classes of countable graphs: for if \seq{G}\ was a countable sequence representing them all, then $\dot{\bigcup}_n G_n \sm \{K_\infty\}$ would be a universal $K_\infty$-minor-free graph. 

Note that \fe\ graph $G$, its minors $\cc_G:=\{H \mid H<G\}$ form a minor-closed family which has $G$ as a \mue. In \Sr{sec Lilian} we will prove that \ta\ uncountably many  minor-closed classes of countable graphs none of which classes has a \mue. This is achieved by combining \Cr{cor} with a beautiful result of Matthiesen \cite{Matthiesen}, which uses Nash-Williams' theorem that trees are well-quasi-ordered to deduce that there are uncountably many topological-minor-twin classes of rooted trees. 

To summarize, we have provided the following response to \Qr{Q DK} which perhaps answers it to the extent possible: 

\begin{theorem} \label{thm unct intro}
There are uncountably many minor-closed classes of countable graphs that do, and uncountably many that do not have a \mue.

\end{theorem}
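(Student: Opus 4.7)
The plan is to handle the two halves separately, both leveraging Matthiesen's theorem \cite{Matthiesen}, which supplies an uncountable family $\{T_\alpha : \alpha \in A\}$ of pairwise topological-minor-inequivalent countable trees.

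For the ``has a \mue'' half, I would use the observation already made in the paper: for every countable graph $G$, the family $\cc_G := \{H : H < G\}$ is a minor-closed class containing $G$ as a \mue. Two such classes coincide exactly when their generators are minor-equivalent, so it suffices to obtain uncountably many minor-inequivalent countable graphs. I would extract these from $\{T_\alpha\}$, for instance by restricting to subcubic trees, where the minor and topological-minor quasi-orders are known to agree. The resulting classes $\cc_{T_\alpha}$ are then pairwise distinct minor-closed classes, each possessing a universal element.

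For the ``no \mue'' half, I would let $G_\alpha$ be the disjoint union of countably many copies of $T_\alpha$. For every vertex $v$, the graph $G_\alpha - v$ still contains infinitely many intact copies of $T_\alpha$ and hence contains $G_\alpha$ as a subgraph, so \Cr{cor} yields that $\forb{G_\alpha}$ has no \mue. To show distinct indices give distinct classes I would argue as follows: since $T_\alpha$ is connected, in any minor embedding of $G_\alpha$ into $G_\beta$ each of the infinitely many $T_\alpha$-components must lie entirely inside one component of $G_\beta$; this forces $G_\alpha < G_\beta$ to be equivalent to $T_\alpha < T_\beta$. Hence whenever $T_\alpha \not< T_\beta$, we have $G_\beta \in \forb{G_\alpha} \sm \forb{G_\beta}$, so $\forb{G_\alpha} \ne \forb{G_\beta}$, and the minor-inequivalence of the underlying trees therefore yields uncountably many distinct $\forb{G_\alpha}$.

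The main obstacle I foresee is the passage from topological-minor-inequivalence (as guaranteed by Matthiesen) to minor-inequivalence, because on infinite trees the minor order is strictly weaker than the topological-minor order—for instance, the infinite star $K_{1,\aleph_0}$ is a minor but not a topological minor of the infinite comb. I would handle this either by restricting attention to subcubic trees, where the two relations coincide, or by a direct adaptation of Matthiesen's Nash-Williams based construction to the minor order.
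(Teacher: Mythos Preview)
Your overall plan mirrors the paper's, but there is a genuine gap in how you invoke Matthiesen's theorem. As stated in the paper (Theorem~\ref{Lilian}), Matthiesen produces uncountably many \emph{rooted} locally finite trees that are pairwise inequivalent under the \emph{order-preserving} topological-minor relation $<_\leq$, not unrooted trees that are inequivalent as topological minors or as minors. Your proposed remedy---restricting to subcubic trees---addresses only the collapse of the minor order to the topological-minor order; it does nothing about the passage from the rooted relation $<_\leq$ to the unrooted one. Indeed Matthiesen's trees are already subcubic (each vertex has at most two successors), so your stated obstacle, the star-versus-comb phenomenon, never arises here; the obstacle you have not named is the one that matters: two rooted trees can be $<_\leq$-incomparable yet minor-equivalent once the roots are forgotten, and nothing in your argument rules this out across the uncountable family.

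For the ``do'' half the paper sidesteps this issue entirely: uncountably many minor-twin classes already follow non-constructively from Theorem~\ref{Kinfty} (a countable list of representatives would assemble into a universal $K_\infty$-minor-free graph), after which the classes $\cc_G$ do the job. For the ``do not'' half the paper builds, for each Matthiesen tree $T$, a gadget $T'$ consisting of a ray carrying alternating pendant triangles and pendant rooted copies of $T$; the triangles force any minor embedding $T'_1 \to T'_2$ to respect the ray and hence the tree order on the embedded copy of $T_1$, so that $T'_1 < T'_2$ implies $T_1 <_\leq T_2$, which Matthiesen forbids for distinct indices. Your device $G_\alpha = \omega \cdot T_\alpha$ is a clean way to meet the hypothesis of Corollary~\ref{cor}, arguably simpler than the paper's ray construction; but your distinctness argument still needs the $T_\alpha$ to be minor-inequivalent as unrooted graphs, and that is precisely the step you have not supplied. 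The quickest repair is to graft the paper's root-marking idea onto your construction: set $G_\alpha = \omega \cdot \widehat T_\alpha$, where $\widehat T_\alpha$ is $T_\alpha$ with a triangle attached at its root, and then carry out the argument (left as a warm-up exercise before Proposition~\ref{prr unct}) that a minor embedding between such triangle-rooted subcubic trees must send root to root and hence respect $\leq$.
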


One can still propose refined versions of the question, e.g.\ asking for sufficient conditions for the existence of a \mue. The following question of this form is motivated by the above discussion on graphs on surfaces and \Cr{cor}:
\begin{noProblem} \label{fin add}
Let $F_1,\ldots, F_k$ be finite connected graphs. Must  $\forb{F_1,\ldots, F_k}$ have a \mue?
\end{noProblem} 
We will discuss this problem further in \Sr{sec Qs}, along with several other open questions.

\bigskip
As a consequence of our proof of \Tr{thm K5 33}, we will obtain (in \Sr{sec Delta}) the following corollary, which as far as I am aware is new even for finite graphs.

\begin{corollary} \label{cor K5}
Every $K_5$-minor-free graph is a minor of a $K_5$-minor-free graph with maximum degree 22. Every $K_{3,3}$-minor-free graph is a minor of a $K_{3,3}$-minor-free graph with maximum degree 9.
\end{corollary}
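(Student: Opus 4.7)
The plan is to revisit the construction behind Theorem~\ref{thm K5 33} in Section~\ref{sec K5}. That proof produces a countable \mue\ $U\in\forb{K_5}$ by iterated clique-sums---along cliques of size at most $3$---of copies of the Diestel--K\"uhn universal planar graph and copies of the Wagner graph $V_8$, following the \kriz--Thomas lifting of Wagner's decomposition to countable graphs. Since $G<U$ for every countable $K_5$-minor-free $G$, it suffices to exhibit $U$ itself as a minor of some $H\in\forb{K_5}$ with $\Delta(H)\le 22$. The $\forb{K_{3,3}}$ statement is parallel, with $2$-clique summits and $K_5$ replacing $V_8$.

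I would pass from $U$ to $H$ by two local modifications, each preserving both $U<H$ and $K_5$-minor-freeness. The first modification replaces each planar summand $P$ of $U$ by a planar minor-parent of maximum degree $3$: at each non-summit vertex $v$ of degree $d$, substitute a cycle $C_v$ of length $d$ (a double-ray if $d$ is infinite) and redistribute the formerly-incident edges around $C_v$ in the cyclic order inherited from the embedding. This keeps the summand planar, contracts back to $P$, and leaves the summit triangles of $P$ intact. The second modification replaces each shared summit along which many summands are pasted by a \emph{clique-strip}: if summands $H_1,H_2,\ldots$ share a $3$-clique $T$, give each $H_i$ a private copy $T_i$ of the triangle, and glue $T_i$ to $T_{i+1}$ by the three matching edges of a triangular prism. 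Contracting the matching edges restores the original amalgamation along $T$, so $U$ remains a minor of the modified graph, and each prism is itself planar. An analogous edge-strip handles $2$-summits in the $K_{3,3}$-minor-free case.

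The main obstacle is to verify that these clique-strip gadgets introduce no new $K_5$ (resp.\ $K_{3,3}$) minor. Planarity of a single prism is not enough on its own: one must argue that any $K_5$ minor in the modified graph pulls back, via contractions of strip-matching edges, to a $K_5$ minor in $U$. This reduces to a structural lemma exploiting the $4$-connectivity of $K_5$ and the fact that each $T_i$ remains a $3$-separator of the modified graph. Once the structural lemma is in place, the explicit bounds follow from a degree count at a worst-case vertex $v$, which can lie in only a bounded number of summit-triangles; each participation contributes at most $2$ triangle-edges, at most $4$ strip-matching edges (two on each side of the chain), and at most $3$ edges from a bounded-degree planar summand or from $V_8$ (whose maximum degree is $3$). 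A careful bookkeeping gives $\Delta(H)\le 22$ for $\forb{K_5}$; with the simpler $2$-summit structure and $K_5$ (maximum degree $4$) replacing $V_8$, the same argument yields $\Delta(H)\le 9$ for $\forb{K_{3,3}}$.
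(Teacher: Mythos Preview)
Your clique-strip gadget is essentially the paper's $k$-pilar ($K_k\times R$, attached along its base $K_k$ to each $\le 3$-clique of the torso), so on the central idea you agree with the paper. Where you diverge is in your first modification and in what you flag as the main obstacle, and both divergences are genuine gaps.

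The first modification is both unnecessary and ill-posed. In the paper's $U$ the planar torso is not the Diestel--K\"uhn graph itself but its fattening $P=F(P_0)$ with $P_0$ chosen subcubic (via \Lr{lem lf}) and triangle-free; this already gives $\Delta(P)\le 9$, so there is nothing to blow up. Worse, in \Dr{def U} \emph{every} clique of $P$ of size $\le 3$ is used (infinitely often) as an adhesion, so every vertex is a ``summit vertex'' and your cycle-substitution can be applied nowhere without destroying an adhesion clique. You also have the difficulty backwards: $K_5$-freeness of the modified graph is the easy part, because a $k$-pilar with $k\le 3$ is planar and attached by a $k$-clique-sum, so the modified graph still admits a $3$-decomposition over $\Gamma$ and the easy direction of \Tr{kriz} applies---no separate structural lemma is needed. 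The actual content is the degree count: for each $v\in V(P)$ one bounds the number of $\le 3$-cliques through $v$ (each contributes one pilar-edge), and it is the specific structure of $F(P_0)$---subcubic, triangle-free $P_0$, so $v$ lies in exactly $9$ triangles of $P$---that yields $9+1+9+9=28$, refined to $22$ by attaching $2$-pilars only along original $P_0$-edges. For $\forb{K_{3,3}}$ no $3$-sums occur, so one uses $P_0$ itself with only $1$- and $2$-pilars; the bottleneck becomes $K_5^\square$, giving $4+1+4=9$. Your sketched count does not track this structure and cannot recover these constants.
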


See \Sr{sec Delta} for a proof and related questions. A proof of \Cr{cor K5} for finite graphs that does not pass through the infinite can be extracted from our methods, repeating about half of the work.

\section{Preliminaries}  \label{prels}

\subsection{Graphs} \label{sec gra}

We follow the terminology of Diestel \DB. We use $V(G)$ to denote the set of vertices, and $E(G)$ the set of edges of a graph \G. For $S\subseteq V(G)$, the subgraph $G[S]$ of \g \defi{induced} by $S$ has vertex set $S$ and contains all edges of \g with both endvertices in $S$.

The \defi{degree} $d(v)=d_G(v)$ of a vertex $v$ in a graph \G, is the number of edges of \g incident with $v$. The \defi{maximum degree} $\Delta(G)$ of \g is $\sup_{v\in V(G)} d(v)$. The \defi{closed neighbourhood} of $v\in V(G)$ is the subgraph of \g induced by $v$ and all its \defi{neighbours}, i.e.\ the vertices sharing an edge with $v$.

We write $G \dot{\cup}  H$  for the disjoint union of two graphs $G,H$, and $\omega \cdot G$ for the disjoint union of countably infinitely many copies of \G.

A \defi{ray} is a one-way infinite path.

\subsection{Embeddings} \label{sec emb}

A \defi{surface} is a 2-manifold without boundary. An \defi{embedding}  of a countable graph \g\ into a surface $\Sigma$ is a map $f: G \to \Sigma$ from the 1-complex obtained from \g when identifying each edge with the interval $[0,1]$ to  $\Sigma$  \st\ the restriction of $f$ to each finite subgraph of $G$ is an embedding in the topological sense, i.e.\ a homeomorphism onto its image. (The reason why we restrict to finite subgraphs here is that the 1-complex topology of \g is not metrizable when \g is not \lf, and so such \g cannot have an embedding into a metrizable space $\Sigma$. For example, a star with infinitely many leaves admits an embedding into $\BS^2$ in our sense but it does not admit a topological embedding.)

A \defi{face} of an embedding $f: G \to \Sigma$ is a component of $\Sigma \sm f(G)$. A \defi{facial walk} is a (finite, or two-way infinite) sequence $\ldots v_0 e_0 v_1 e_1\ldots $ of vertices and edges of \g whose $f$-images appear along the boundary of a face of $f$ in that order.

The \defi{(orientable) genus} $\gamma(G)$ of a graph \g is the smallest genus $\gamma(\Sigma)$ of an orientable surface $\Sigma$ into which \g embeds. 


\subsection{Minors} \label{sec min}

Let $G,H$ be graphs. An $H$-\defi{minor} of $G$ is a collection of disjoint connected subgraphs $B_v, v\in V(H)$ of $G$, called \defi{branch sets}, 
and edges $E_{uv}, uv\in E(H)$ of \g 
such that each $E_{uv}$ has one end-vertex in $B_u$ and one in $B_v$. We write $H<G$ to express that $G$ has an $H$ minor.

We say that $G$ and $H$ are \defi{minor-twins}, if both $G<H$ and $H<G$ hold. Any two finite minor-twins are isomorphic, but in the infinite case it is often helpful to think in terms of equivalence classes of minor-twins instead of isolated graphs.

Given a set $X$ of graphs, we write $\forb{X}$ for the class of graphs $H$ \st\ no element of $X$ is a minor of $H$. For a minor-closed graph class \cc, we write $\ex{\cc}$ for the set of graphs $G$ \st\ $G\not\in \cc$ but $H\in \cc$ for every $H<G$ \st\ $G\not <H$. If \cc\ consists of finite graphs, then $\forb{\ex{\cc}}=\cc$ holds by the definitions. For infinite graphs this will fail if there is an infinite descending sequence $G_1 > G_2 > \ldots$ (consisting of graphs not in \cc) which as far as I know is open \cite{ThoWel}.

\section{Uncountably many classes with no \mue} \label{sec Lilian}

The aim of this section is to prove

\begin{proposition} \label{prr unct}
There are uncountably many classes of countable graphs that have no \mue.
\end{proposition}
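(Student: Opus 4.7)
The plan is to combine Matthiesen's theorem with the $\omega\cdot H$ construction underlying \Cr{cor}. Matthiesen provides an uncountable family $\ffml{T}{A}$ of countable rooted trees that are pairwise topological-minor-non-twin. For trees (where any minor is obtained by contracting paths of degree-2 vertices in a subtree) the minor and topological minor relations coincide, so these are also pairwise rooted-minor-non-twin. My intended intermediate goal is to replace each $(T_\alpha,r_\alpha)$ by an unrooted connected graph $H_\alpha$ so that the family $\ffml{H}{A}$ is still uncountable and pairwise \emph{unrooted-minor-non-twin}.

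Assuming the $H_\alpha$ are produced, set $G_\alpha := \omega \cdot H_\alpha$. Deleting any vertex $v\in V(G_\alpha)$ affects at most one copy of $H_\alpha$, leaving infinitely many untouched copies, and hence $G_\alpha$ is a subgraph (in particular a minor) of $G_\alpha-v$. Thus $G_\alpha < G_\alpha-v$ for every $v$, and \Cr{cor} yields that $\forb{G_\alpha}$ has no \mue.

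It remains to show that the classes $\forb{G_\alpha}$ are pairwise distinct, equivalently that the $G_\alpha$ are pairwise minor-non-twin. Here the connectedness of the $H_\alpha$ is key: any branch-set realization of $H_\alpha$ as a minor of $\omega\cdot H_\beta$ forces all branch sets into a single component of $\omega \cdot H_\beta$, since any two of them corresponding to adjacent vertices of $H_\alpha$ are joined by an edge of $\omega\cdot H_\beta$ and $H_\alpha$ is connected. Hence $\omega\cdot H_\alpha < \omega\cdot H_\beta$ forces $H_\alpha < H_\beta$, and by symmetry $G_\alpha,G_\beta$ are minor-twins iff $H_\alpha,H_\beta$ are; the choice of $\ffml{H}{A}$ rules this out for $\alpha\neq\beta$.

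The crux is therefore the passage from rooted to unrooted in the first step: forgetting the root can collapse rooted-minor-twin classes into a single unrooted one, and we must rule this out. My preferred route is to attach at each root $r_\alpha$ a small "root marker" gadget that is recoverable up to minors — for instance, identify $r_\alpha$ with a vertex of a $K_4$ pendant, which survives minor operations and is the unique vertex of $H_\alpha$ incident to a $K_4$-subgraph. Under such a marker, rooted-minor equivalence of $(T_\alpha,r_\alpha)$ translates to unrooted minor equivalence of $H_\alpha$, so the uncountability of the rooted family transfers directly. A lightweight back-up is simply to note that each unrooted countable tree supports only $\aleph_0$ rootings, which already gives uncountably many non-isomorphic unrooted trees from Matthiesen's family; this by itself is weaker than minor-non-twin, but combined with a mild structural choice of the $T_\alpha$ (e.g.\ taking them with a distinguished vertex of unique local structure) should suffice.
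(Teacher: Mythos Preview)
Your strategy coincides with the paper's: both feed Matthiesen's family into \Cr{cor} via a root-marking gadget that converts the rooted relation $<_\leq$ into the unrooted minor relation. The paper's gadget is a ray carrying alternating triangles and copies of $T$ (so that $T'$ is connected and self-embeds past any vertex), whereas you attach a $K_4$ pendant at the root and then pass to $\omega\cdot H_\alpha$. Both work; yours is arguably tidier. Note, though, that the paper's verification that its gadget does the job (that $T_1'<T_2'$ forces $T_1<_\leq T_2$) takes two full paragraphs, so you should not expect yours to be a one-liner either.

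Two points in your write-up need repair. First, the parenthetical justification that for trees minor and topological minor coincide is false in general: $K_{1,4}$ is a minor but not a topological minor of the tree obtained from $P_4$ by hanging a leaf on each internal vertex. What you actually need---and what the paper invokes explicitly---is that Matthiesen's trees have maximum degree~$3$, so that minor and topological minor agree for \emph{these} $H$.

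Second, the step you correctly flag as the crux is asserted but not argued. Here is what is missing for the $K_4$ pendant. Given $H_\alpha<H_\beta$, consider the four branch sets for the $K_4$ of $H_\alpha$. If two of them avoided the $K_4$ of $H_\beta$ they would lie in the tree $T_\beta-r_\beta$, and each of the remaining two would then have to pass through the cut-vertex $r_\beta$ to reach both of them---impossible since branch sets are disjoint. Hence all four branch sets partition the four $K_4$-vertices of $H_\beta$; the three non-root $K_4$-vertices of $H_\beta$ have all their neighbours inside that $K_4$, so their branch sets are singletons, and the fourth branch set contains $r_\beta$. Since $r_\alpha$ is the only $K_4$-vertex of $H_\alpha$ with a neighbour in $T_\alpha$, its branch set must be this fourth one. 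The remaining branch sets then realise $T_\alpha$ as a rooted minor of $T_\beta$, which the degree-$3$ observation upgrades to $<_\leq$. Your ``back-up'' via counting rootings does not yield minor-non-twins and should be dropped.
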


To begin with, recall that \Tr{Kinfty} implies that there are uncountably many minor-twin classes of countable graphs. 

As we will see, one can find uncountably many minor-twin classes among much more restricted classes of graphs using the following result of Matthiesen.

The \defi{tree-order} $\leq$ on the set of vertices of a rooted tree $(T,r)$ is defined by setting $x \leq y$ whenever $x$ lies on the unique path in $T$ from its root $r$ to $y$. Given two rooted trees $(T,r), (T',r')$, we write $(T,r) <_\leq (T',r')$ if there is an embedding $t: (T,r) \to  (T',r')$ that respects the tree-order when restricted to $V(T)$.

\newpage
\begin{theorem}[{\cite{Matthiesen}}] \label{Lilian}
There are uncountably many equivalence classes of locally finite rooted trees  \wrt\ the order-preserving topological minor relation $<_\leq$. This remains true in the class of rooted trees every vertex of which has at most two successors.
\end{theorem}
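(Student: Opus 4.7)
My plan is to construct, by transfinite recursion, a family $\{T_\alpha\}_{\alpha<\omega_1}$ of locally finite rooted trees with at most two successors per vertex, forming a strictly increasing chain under $<_\leq$. Since $\omega_1$ is uncountable, this immediately yields uncountably many pairwise inequivalent trees, establishing both assertions of the theorem. Let $T_0$ be a single rooted vertex; form $T_{\alpha+1}$ from $T_\alpha$ by attaching a new root joined by an edge to the old one; for a countable limit $\alpha$, fix a fundamental sequence $\alpha_0<\alpha_1<\dotsb$ with $\sup_n \alpha_n=\alpha$, and let $T_\alpha$ consist of a ray $v_0v_1v_2\dotsb$ rooted at $v_0$ together with a copy of $T_{\alpha_n}$ attached to each $v_n$ via a new edge to its root. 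Each spine vertex has exactly two successors, and the construction is manifestly locally finite and binary throughout.

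The forward implication $\alpha\leq\beta \Rightarrow T_\alpha <_\leq T_\beta$ is a routine transfinite induction on $\beta$: in the successor case $T_\gamma$ sits as a rooted subtree of $T_{\gamma+1}$; in the limit case, $\alpha<\beta$ gives $\alpha\leq\beta_n$ for some $n$, whence $T_\alpha <_\leq T_{\beta_n} \subseteq T_\beta$ by induction, and this embedding is rooted and order-preserving.

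The reverse implication $\alpha<\beta \Rightarrow T_\beta \not<_\leq T_\alpha$ is the heart of the argument, and where Nash-Williams' theorem enters. My approach is to define, using Nash-Williams' well-quasi-ordering of locally finite rooted trees under $<_\leq$, an ordinal-valued rank $\rho$ on $<_\leq$-equivalence classes via $\rho([T]) = \sup\{\rho([S])+1 : S<_\leq T,\ S\not\equiv T\}$; this is well-defined because well-quasi-ordering implies the quotient poset is well-founded, and it is monotone by transitivity. The lower bound $\rho(T_\alpha)\geq\alpha$ then follows immediately from the forward implication and induction, so it remains to verify the matching upper bound $\rho(T_\alpha)\leq\alpha$, equivalently that no tree of rank strictly exceeding $\alpha$ embeds order-preservingly into $T_\alpha$. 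The main obstacle, which I expect will occupy the bulk of the proof, is that at limit stages $T_\alpha$ exhibits self-similarity: the subtree rooted at any spine vertex $v_n$ is itself $<_\leq$-equivalent to $T_\alpha$ (only the fundamental sequence has been truncated), so a naive structural recursion loops. Working around this requires a careful analysis of how a root-preserving topological-minor embedding into $T_\alpha$ interacts with the spine — exploiting the fact that distinct children of the source root must map into subtrees rooted at distinct children of the target root, because the paths substituting edges are internally disjoint — to conclude that the image either lies inside some $T_{\alpha_n}$, bounding the ordinal by $\alpha_n<\alpha$ via induction, or propagates unboundedly along the spine, which is ruled out by an application of the well-quasi-ordering to any such infinite propagation.
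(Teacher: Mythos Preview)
The paper does not prove this theorem: it is quoted from \cite{Matthiesen} and used as a black box in the proof of \Prr{prr unct}. So there is no in-paper argument to compare against, and I evaluate your attempt on its own.

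Your successor step does not produce a strictly larger tree. Take $T_\omega$ built from the fundamental sequence $0,1,2,\ldots$: a ray $v_0v_1v_2\ldots$ with the path $T_n$ (on $n+1$ vertices) attached at $v_n$. Your $T_{\omega+1}$ adjoins a new root $r$ above $v_0$, but $T_{\omega+1}<_\leq T_\omega$: send $r\mapsto v_0$, shift the spine of the embedded copy of $T_\omega$ by one (its $n$th spine vertex to $v_{n+1}$), and fit its pendant $T_n$ inside the strictly longer pendant $T_{n+1}$ at $v_{n+1}$. Thus $T_{\omega+1}\equiv T_\omega$ and your chain stalls; the same happens above every limit ordinal, precisely because of the self-similarity you yourself flag (the subtree of $T_\lambda$ at any spine vertex is $<_\leq$-equivalent to $T_\lambda$, so prepending a root is absorbed).

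The rank argument is broken on both sides as well. The lower bound is circular: deducing $\rho(T_{\alpha+1})\geq\rho(T_\alpha)+1$ requires $T_\alpha\not\equiv T_{\alpha+1}$, which is the reverse implication you are trying to prove, not something that follows ``immediately from the forward implication''. And the upper bound $\rho(T_\alpha)\leq\alpha$ is false already at $\alpha=\omega$: the bare ray $R$ (the spine with no pendants) satisfies $R<_\leq T_\omega$ strictly, since $T_\omega\not<_\leq R$ (the root of $T_\omega$ has two children while that of $R$ has one), and $\rho(R)=\omega$ because the trees strictly below $R$ are exactly the finite paths of ranks $0,1,2,\ldots$. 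Hence $\rho(T_\omega)\geq\omega+1$, so the exact equality $\rho(T_\alpha)=\alpha$ you aim for cannot hold.
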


\Tr{Lilian} provides another way for proving that there are uncountably many minor-twin classes of countable graphs: let \ct\ be an uncountable family of rooted trees no two of which are in the same equivalence class of $<_\leq$ as provided by \Tr{Lilian}, and attach a triangle to the root of each member of \ct. The resulting family of (unrooted) graphs contains no pair of minor-twins. We leave the details to the reader as a warm-up exercise before the following proof.

\begin{proof}[Proof of \Prr{prr unct}]
Let \ct\ be an uncountable family of rooted trees as provided by \Tr{Lilian}, i.e.\ \st\ no two elements are equivalent \wrt\ $<_\leq$, and every vertex of each $T\in \ct$ has at most two successors in $\leq$.

Our plan is to associate to each $T\in \ct$ a graph class $\cc_T$, defined by forbidding a relative of $T$ as a minor, \st\  $\cc_T$ has no \mue\ by \Cr{cor}. But as $T$ does not have to satisfy the premise of \Cr{cor}, we introduce a graph $T'$ that does using the construction of \fig{figTp}. To make this precise, we start with a ray $R=R_T=r_1 r_2 \ldots$, and attach a new edge $e_i=r_i v_i$ to the $i$th vertex of $R$ \fe\ $i\in \N$. Then, for every even $i$, we attach a triangle to $v_i$, and for every odd $i$, we attach a copy of $T$ identifying the root of $T$ with $v_i$. Let $\ct':= \{T' \mid T \in \ct\}$.
\begin{figure} 
\begin{center}
\begin{overpic}[width=.55\linewidth]{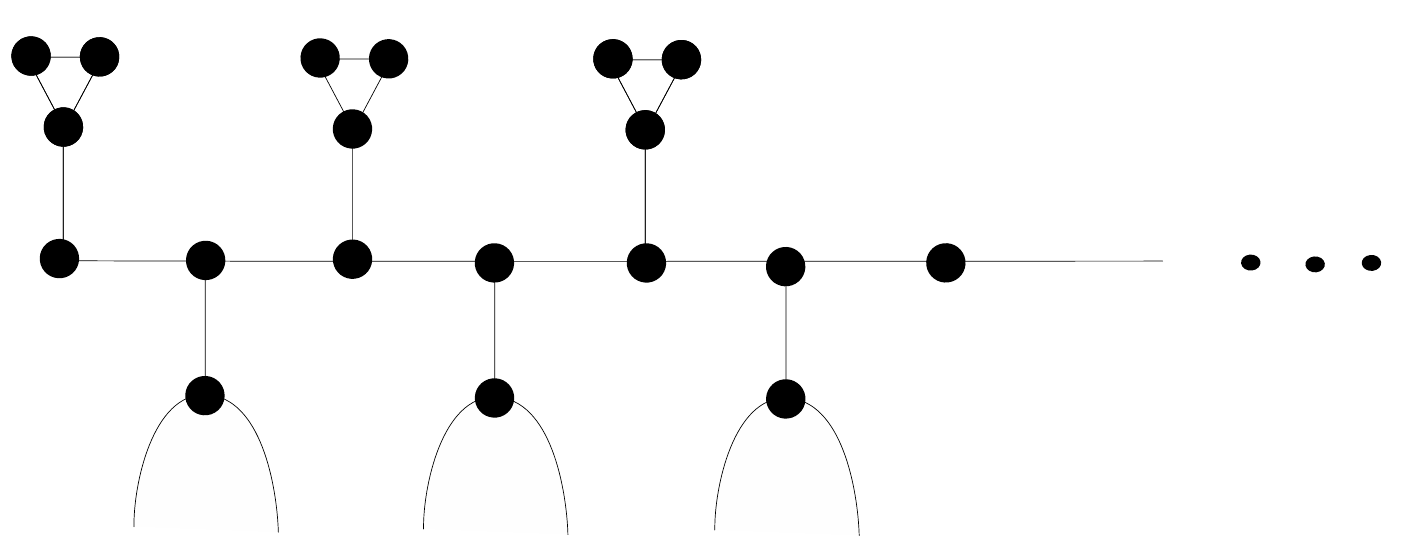} 
\put(2,16){$r_0$}
\put(13,24){$r_1$}
\put(24,16){$r_2$}
\put(73,23){$R$}
\put(12.5,3){$T$}
\put(33,3){$T$}
\put(53.5,3){$T$}
\end{overpic}
\end{center}
\caption{The construction of $T'$.} \label{figTp}
\end{figure}
We claim that
\labtequ{twins}{$\ct'$ contains no pair of minor-twins.}
We emphasize that $\ct'$ is a family of graphs with no designated root, and \eqref{twins} refers to the standard notion of minor, with no reference to $<_\leq$.

Once \eqref{twins} is proven, the above idea can be easily carried out: we define the graph classes $\cc(T):= Forb(T'), T\in \ct$. They are all distinct by  \eqref{twins}, and uncountably many. Note that $T'$ embeds into its subgraph to the right of an arbitrary $r_i$ of $R_T$, and therefore $T'$ satisfies the premise of \Cr{cor}, from which we deduce that $\cc(T)$ has no \mue.

\medskip
To prove \eqref{twins}, suppose that $T'_1 < T'_2$ for some $T'_1, T'_2 \in \ct'$. We will deduce that $T_1 <_\leq T_2$. This immediately implies \eqref{twins}, as $T_1 , T_2$ cannot be twins \wrt\ $<_\leq$. 

To see the latter claim, note first that as all our graphs have maximum degree 3, we can find a {topological} $T'_1$ minor in $T'_2$, i.e. an embedding $g: T'_1 \to T'_2$. Next, we note that $g(R_{T_1})$ must contain all but finitely many edges of $R_{T_2}$, because otherwise $g(R_{T_1})$ could only contain a finite part of $R_{T_2}$, in which case it would be impossible for $g$ to accommodate the infinitely many triangles of $T'_1$. This means that some copy $C_1$ of $T_1$ in $T'_1$ must be mapped by $g$ to the complement of $R_{T_2}$ in $T'_2$, and therefore to a copy $C_2$ of  $T_2$ in $T'_2$. Moreover, the edge $r_i v_i$ incident with $C_1$ must be mapped so that $g(r_i) \leq g(v_i)$ \wrt\ the tree order $\leq$ of $C_2$ (extended to include the vertex of $R_{T_2}$ to which $C_2$ attaches, which vertex serves as the root). This easily implies, by induction of the layers of $C_1$, that $g$ embeds $C_1$ into $C_2$ preserving $\leq$. This proves our claim that $T_1 <_\leq T_2$, establishing \eqref{twins}.

\end{proof}

\section{Universal elements for embeddable graphs} \label{sec CS}

As mentioned in the introduction, C.~Thomassen observed that for every closed surface $\Sigma$ other than $\BS^2$, the class 
of countable graphs that embed in $\Sigma$ has no \mue. We repeat the argument verbatim from \cite[\S~6]{DieKuhUni}: \\

\begin{minipage}[c]{.91\textwidth} 
{\small  Indeed any such universal graph $U$ would contain a cycle $C$ whose deletion reduces the Euler genus of $U$. Then every minor of $U$ can be embedded in a simpler surface than $\Sigma$ after the deletion of at most $|C|$ vertices. This however will not be the case for every graph embeddable in $\Sigma$.
}
\end{minipage}

\bigskip
Nevertheless, we will prove 

\begin{theorem} \label{thm CS}
Let $\Sigma$ be a closed orientable surface, and let $\cc_\Sigma$ be the class of countable graphs $G$ \st\ every component of \g embeds in $\Sigma$. Then $\cc_\Sigma$ has a \mue.
\end{theorem}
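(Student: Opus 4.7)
The plan is to build, for each closed orientable surface $\Sigma$, a single connected countable graph $U^*_\Sigma \in \cc_\Sigma$ that is a \mue\ for the subclass $\cc_\Sigma^c \subseteq \cc_\Sigma$ of connected graphs, and then take $\omega \cdot U^*_\Sigma$ as the desired \mue\ for $\cc_\Sigma$. Indeed, every $G \in \cc_\Sigma$ has at most countably many components, each connected and embeddable in $\Sigma$; realising each one as a minor of a distinct copy of $U^*_\Sigma$ within $\omega \cdot U^*_\Sigma$ yields the required minor of $G$. Thomassen's obstruction ---that a single universal graph for $\cc_\Sigma^c$ could be drawn on $\Sigma$ and become genus-reduced after deleting a short cycle--- does not apply to the disjoint union, so this reduction is the natural way to exploit component-wise embeddability.

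For the main step of constructing $U^*_\Sigma$, I would adapt the Diestel--K\"uhn construction of the universal planar graph from \Tr{thm DK}, replacing the sphere by $\Sigma$. Fix a 2-cell triangulation $T_0$ of $\Sigma$ and build an increasing sequence $T_0 \subseteq T_1 \subseteq \ldots$ of locally finite graphs embedded in $\Sigma$, where $T_{i+1}$ is obtained from $T_i$ by a controlled face-subdivision procedure designed to exhaust every possible local refinement; set $U^*_\Sigma := \bigcup_i T_i$. To check that every finite $G \in \cc_\Sigma^c$ is a minor of $U^*_\Sigma$, extend $G$ to a finite triangulation $\widetilde G$ of $\Sigma$ containing it as a subgraph. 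Then $\widetilde G$ is 3-connected of genus exactly $\gamma(\Sigma)$, and Youngs' theorem guarantees that its embedding in $\Sigma$ is essentially unique (and automatically 2-cell). This rigidity lets us align $\widetilde G$ with the face structure of some $T_i$, and hence find $G$ as a minor.

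To extend from finite to countable infinite $G \in \cc_\Sigma^c$, I would exhaust $G$ by a nested sequence of finite connected subgraphs $G_1 \subseteq G_2 \subseteq \ldots$, apply the finite case to obtain a minor-embedding of each $G_i$ in $U^*_\Sigma$, and extract a coherent minor-embedding of the whole of $G$ via a K\"onig-type compactness argument on the tree of partial branch-set assignments. The main obstacle here is exactly this coherence: the branch sets realising $G_i$ as a minor must be extendable to branch sets realising $G_{i+1}$, and the resulting limit must be a valid minor of all of $G$ (with no branch set becoming problematic at infinity). The refinement sequence $T_i$ and the rigidity supplied by Youngs' theorem should be chosen precisely so that such extensions are always available; this is the step where the author's advertised simplification of one ingredient of the Diestel--K\"uhn proof should materialise, and where handy tools for dealing with infinite embedded graphs (e.g.\ controlling how branch sets may accumulate on $\Sigma$) need to be developed.
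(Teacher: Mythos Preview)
Your proposal is a genuinely different strategy from the paper's, and it has a real gap at the step you yourself flag as the main obstacle.

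The paper does \emph{not} re-run the Diestel--K\"uhn refinement construction on $\Sigma$. Instead, it uses the universal planar graph $P$ as a black box. The universal element $U_\Sigma$ is a disjoint union, over all finite graphs $G$ embeddable in $\Sigma$ and all admissible choices, of graphs $G'$ obtained by gluing a piece of $P$ into every facial cycle of a minimum-genus embedding of $G$. For universality, given a connected $X\in\cc_\Sigma$ (assumed locally finite via \Lr{lem lf}), one finds a \emph{finite} subgraph $H\subset X$ with $\gamma(H)=\gamma(X)$; by Youngs' theorem every face of $H$ is a disc, so the remainder of $X$ decomposes into planar pieces $X_F$, one per face $F$. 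Each $X_F$ is then a minor of $P$, hence of the piece of $P$ glued into that face in some $G'$ with $G=H^\otimes$ (a mild ``fattening'' of $H$ to guarantee facial \emph{cycles}). No limiting or compactness argument is needed: the infinite part of $X$ is handled in one shot by the planarity of the face-pieces and the universality of $P$.

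Your plan, by contrast, builds a single $U^*_\Sigma$ by iterated face refinement on $\Sigma$ and then appeals to a K\"onig-type argument to pass from finite to infinite $G$. That passage is precisely the delicate part of the original Diestel--K\"uhn proof in the planar case, and you give no mechanism for ensuring the branch sets for $G_i$ extend coherently to those for $G_{i+1}$; K\"onig's lemma needs finite branching, and the space of minor embeddings of $G_i$ into $U^*_\Sigma$ is typically infinite. You write that ``this is the step where the author's advertised simplification \ldots\ should materialise,'' but the paper's simplification (\Lr{lem lf} and the fattening $X^\otimes$) is not a better compactness argument---it is a device that \emph{eliminates} the need for one, by reducing everything beyond a finite core to the already-solved planar problem. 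So as it stands your proof sketch is missing exactly the ingredient that the paper's approach is designed to avoid.
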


The following will allow us to restrict our attention to \lf\ graphs in the proof of \Tr{thm CS}. Moreover, it allows us to choose our universal graphs to be \lf\ if we want to. A graph is \defi{sub-cubic} if it has maximum degree 3.

\begin{lemma}\label{lem lf}
Let $G$ be a countable graph embeddable in a closed surface $\Sigma$. There is a sub-cubic graph $G'$ embeddable in  $\Sigma$ \st\ $G < G'$.
\end{lemma}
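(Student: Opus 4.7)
My plan is to replace each vertex $v$ of $G$ with $d_G(v) > 3$ by a sub-cubic \emph{path gadget} $P_v$: a finite path $v_1 v_2 \ldots v_d$ when $d := d_G(v) \in \{4,5,\ldots\}$, and a one-way infinite ray $v_1 v_2 \ldots$ when $d_G(v) = \infty$. Enumerating the edges of $G$ at $v$ as $e_1^v, e_2^v, \ldots$, I then redirect each $e_i^v$ so that its new endpoint is $v_i$ in place of $v$, yielding a graph $G'$. Each $v_i$ is incident to at most two edges of $P_v$ plus the one redirected $e_i^v$, so $\Delta(G') \le 3$, and the branch sets $V(P_v)$ (with $\{v\}$ for those $v$ of original degree $\le 3$) at once witness $G < G'$.

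The substantive task is to embed $G'$ in $\Sigma$. Starting from an embedding $f\colon G \to \Sigma$, for each $v$ with $d_G(v) > 3$ I pick a closed disk $D_v \subset \Sigma$ around $f(v)$, small enough that the $D_v$'s are pairwise disjoint and $D_v$ meets no edges of $G$ other than those incident to $v$ (possible since each edge of $G$ is a compact arc in $\Sigma$). Outside $\bigcup_v D_v$ the embedding is kept unchanged; inside $D_v$ the local ``spider'' at $f(v)$ is replaced by an embedded copy of $P_v$ together with the redirected edge-stubs. In the finite-degree case this is routine: choose the enumeration $e_1^v, \ldots, e_d^v$ to agree with the cyclic rotation at $f(v)$, and embed $P_v$ as a single arc across $D_v$ that crosses each stub once in that order, placing $v_i$ at the $i$-th crossing.

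The main obstacle is the infinite-degree case, where the stubs of the $e_i^v$ accumulate at $f(v)$ and no single arc in $D_v$ bounded away from $f(v)$ can reach them all. I handle this by building the embedding of $P_v$ so that $v_i \to f(v)$ in $\Sigma$. Fix a nested sequence $D_v \supset D_v^{(1)} \supset D_v^{(2)} \supset \cdots$ with $\bigcap_n D_v^{(n)} = \{f(v)\}$, pick for each $i$ a cut-point $v_i$ on $e_i^v$ deep inside $D_v^{(n(i))}$ for some sequence of depths $n(i) \to \infty$ (possible because $e_i^v$ meets every neighbourhood of $f(v)$), and route the new arc $v_{i-1} v_i$ inside the annulus between $D_v^{(n(i-1))}$ and $D_v^{(n(i))}$. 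The delicate point is enumerating the $e_i^v$ and choosing the depths $n(i)$ so that consecutive cut-points $v_{i-1}, v_i$ lie in the same cyclic wedge of the relevant annulus and the arc can be drawn without crossing other stubs; this should follow from the cyclic order that the $e_i^v$ inherit on each boundary circle $\partial D_v^{(n)}$. Once the local replacements are in place, any finite subgraph of $G'$ lies outside some sufficiently deep $D_v^{(N)}$ for every relevant $v$, and is drawn there with finitely many non-crossing arcs in $\Sigma$, hence is topologically embedded; so $f'\colon G' \to \Sigma$ is an embedding in the paper's sense.
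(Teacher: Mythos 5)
Your overall plan — blow up each high-degree vertex into a sub-cubic gadget inside a protective disc, then redraw the incident edge-stubs — is the same strategy the paper uses, but there is a genuine gap at the very first topological step. You assert that you can choose a closed disc $D_v$ around $f(v)$ meeting no edges of $G$ other than those incident to $v$, and you justify this by ``each edge of $G$ is a compact arc.'' That reasoning is not sufficient: compactness of each individual edge does not prevent infinitely many \emph{other} edges and vertices from accumulating at $f(v)$. Under the paper's (necessarily weak) notion of embedding for non-locally-finite graphs --- only finite subgraphs are required to be topologically embedded --- such accumulation can and does occur: one can easily construct embeddings in which $f(V(G))$ is dense in a neighbourhood of $f(v)$, so that no disc of the kind you need exists. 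The paper devotes an entire auxiliary statement (Proposition~\ref{generous}, on ``generous'' embeddings) to repairing exactly this, by inductively nudging the images of later vertices and edges out of the way of the discs $D_v$ chosen so far. Without some version of that argument, your construction does not get off the ground.

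Beyond that, your handling of the infinite-degree case is plausible but left at the level of a sketch: you acknowledge that choosing the cut-points $v_i$ and the depths $n(i)$ so that the connecting arcs can be drawn without crossings is ``delicate'' and that it ``should follow'' from the cyclic order, but this is precisely where the work is, and sending $v_i\to f(v)$ makes the bookkeeping of the spiralling stubs fiddly. The paper instead replaces each vertex by a copy of a fixed locally finite tree (the binary tree with a pendant leaf at every node) embedded in an \emph{inner} disc, with the crucial feature that its leaves lie on the boundary circle in a \emph{dense} cyclic order. Because the gadget is bounded away from $f(v)$ and presents a dense supply of attachment points on a circle, the edge-stubs can be hooked up greedily one at a time with no crossings and no need to control accumulation at the centre. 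That is the cleaner route, and it is what makes the induction close. If you want to retain a path/ray gadget for its slightly smaller degree bound, you would still need the generous-embedding step first, and you would need to spell out the order-compatibility of your nested annuli in detail.
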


A proof of the special case of \Lr{lem lf} where $\Sigma$ is the plane occupies 4 pages in \cite{DieKuhUni}; our proof is simpler.\footnote{I suspect that it may be possible to obtain a simpler construction of a universal planar graph by modifying the construction of \cite{GeoInf} so as to increase the number of subdivisions in each step.}

The idea is to obtain $G'$ by blowing up each vertex $v$ of \g into a \lf\ tree $T_v$. For this to be possible we need enough space around $v$ in order to embed $T_v$. This motivates the following definition. An embedding $f: G \to \Sigma$ is called \defi{generous around vertices}, if \fe\ $v\in V(G)$ there is a topological open disc $D_v \subset \Sigma$ \st\ $D_v \cap f(V(G)) =f(v)$ and $D_v$ avoids $f(uw)$ for every edge $uw$ with $u,w\neq v$. Similarly, we say that $f$ is \defi{generous around edges}, if \fe\ $e=uv\in E(G)$ there is a topological open disc $D_e \subset \Sigma$ \st\ $D_e \cap f(G) = f(e) \sm \{f(u),f(v)\}$. We call $f$ \defi{generous} if it is generous around both vertices and edges.

\begin{proposition}\label{generous}	
If a countable graph \g admits an embedding into a surface $\Sigma$, then it admits a generous one.
\end{proposition}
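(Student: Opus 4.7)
The plan is to modify $f$ inductively into a generous embedding $f'$, processing one vertex or edge at a time and performing a small local re-routing that establishes the required disc at that step. Enumerate $V(G)\cup E(G)=\{x_1,x_2,\ldots\}$. Starting from $f_0:=f$, I build embeddings $f_1,f_2,\ldots$ and pairwise disjoint open discs $D_1,D_2,\ldots\subseteq\Sigma$ maintaining the following invariant at stage $n$: if $x_n=v$ is a vertex then $D_n$ is a disc containing $f_n(v)$ and meeting $f_n(G)$ only in pieces of edges incident to $v$; if $x_n=e$ is an edge then $D_n$ is a topological disc whose intersection with $f_n(G)$ equals $f_n(e)\setminus\{f_n(u),f_n(v)\}$; and the modification $f_{n-1}\mapsto f_n$ is supported in a small open set $B_n$ that is disjoint from $D_1\cup\cdots\cup D_{n-1}$.

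For the inductive step, given $f_{n-1}$, choose a closed disc $\bar B_n\subseteq\Sigma$ centred on $f_{n-1}(x_n)$ (or a thin closed tube along $\mathring{f_{n-1}(e)}$ if $x_n=e$) that is disjoint from $D_1,\ldots,D_{n-1}$ and has diameter less than $2^{-n}$ in some fixed metric on $\Sigma$. Inside $\bar B_n$, the image $f_{n-1}(G)$ consists of the pieces we want to retain (parts of edges incident to $v$, resp.\ the interior of $e$) together with countably many disjoint arcs from non-incident edges that happen to enter $\bar B_n$. Using a chart $\bar B_n\cong D^2$, construct an ambient isotopy of $\Sigma$ supported in $\mathring{\bar B_n}$ that fixes the retained pieces and combs all interloping arcs into an arbitrarily thin collar of $\partial\bar B_n$. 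Define $f_n$ to be the composition of $f_{n-1}$ with the time-$1$ homeomorphism, and let $D_n$ be a smaller concentric disc (or tube) that, after the isotopy, contains no interlopers.

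Convergence and the embedding property: since the modification at stage $n$ is supported in a set of diameter $<2^{-n}$, the maps $f_n$ converge pointwise to a map $f':G\to\Sigma$. For any finite subgraph $H\subseteq G$, pick $N$ so large that every vertex and edge of $H$ lies in $\{x_1,\ldots,x_N\}$; then $f'(H)$ is covered by $D_1\cup\cdots\cup D_N$, and each $D_i$ is frozen from stage $i$ onwards since every future $B_m$ is disjoint from it. Hence $f'|_H=f_N|_H$, which is a topological embedding, so $f'$ is an embedding of $G$ into $\Sigma$. The discs $D_n$ persist unchanged past stage $n$, and therefore witness the generous condition for $f'$ at $x_n$.

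The main obstacle is the ambient-isotopy step: producing an isotopy of $D^2$, supported in its interior, that simultaneously pushes countably many disjoint embedded arcs into an arbitrarily thin boundary collar while fixing a prescribed finite star through the centre. I expect this to be a routine planar combing argument, the subtle point being that interlopers may be infinite and their endpoints on $\partial\bar B_n$ may accumulate; the key leverage is that disjointness of the arcs lets one sweep them outwards in unison along radial fibres, with the retained star providing the only obstruction (which is finite, and avoidable).
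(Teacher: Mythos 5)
There is a genuine gap in the invariant you are trying to maintain. You require the discs $D_1,D_2,\ldots$ to be \emph{pairwise disjoint}, and simultaneously claim that $D_n$ is the disc that will ultimately witness generosity at $x_n$. These two demands are incompatible whenever $x_i$ and $x_j$ are an incident vertex--edge pair. Indeed, if $x_i=v$ is a vertex then $D_i$ is an open set containing $f(v)$, so it contains a neighbourhood of $f(v)$ and hence an initial piece of $f(e)$ for every edge $e$ incident to $v$; while if $x_j=e=uv$ is an edge then $D_j\cap f(G)=f(e)\setminus\{f(u),f(v)\}$ forces $D_j$ to contain the \emph{entire} open arc $f(e)$, in particular the piece near $f(v)$. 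So $D_i\cap D_j\neq\emptyset$. This also kills the inductive step itself: if $e$ is processed first, then later no closed disc $\bar B_m$ centred at $f_{m-1}(v)$ can be disjoint from $D_e$, because $f_{m-1}(v)$ lies in the closure of $D_e$; if $v$ is processed first, then the tube $\bar B_n$ along $\mathring{f_{n-1}(e)}$ must enter $D_v$. Either way the ``freezing'' of earlier discs breaks down, and with it your argument that $f'\restriction H=f_N\restriction H$.

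The fix is essentially the two-pass structure used in the paper: first make the embedding generous around vertices by an inductive pushing argument (keeping only the \emph{vertex} discs pairwise disjoint and frozen), and then, in a second pass starting from the resulting embedding, fix discs $D'_{uv}$ around the \emph{middle} portions $g(uv)\setminus(D_u\cup D_v)$ of the edges and repeat the process; the final witnessing disc for an edge is then assembled from $D'_{uv}$ together with pieces of $D_u$ and $D_v$, and is allowed to overlap those vertex discs. Two further points worth tightening: (a) for an edge, your support set $\bar B_n$ is a tube along the whole arc $f_{n-1}(e)$, whose diameter is at least the length of that arc, so the claim that the $n$th modification is supported in a set of diameter $<2^{-n}$ is not available; what you actually need (and what is true) is that the displacement $\sup_x d(f_{n-1}(x),f_n(x))$ can be made $<2^{-n}$; and (b) the single ``combing'' isotopy you invoke must handle infinitely many interloping arcs that may accumulate on the retained star at the centre, which is not obviously routine --- the paper avoids it by exhausting $G$ with finite subgraphs so that each step only moves finitely many new intruders.
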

\begin{proof}
Let $f: G \to \Sigma$ be an arbitrary embedding, and let  \seq{H}  be a sequence of finite connected subgraphs  \defi{exhausting} $G$, i.e.\ $\bigcup_{\nin} H_n = G$. We define an embedding $g: G \to \Sigma$ which is generous around vertices by inductively making local alterations to the way $f$ embeds $H_n$. We start by letting $g(H_0) = f(H_0)$. Since $H_0$ is finite, we can choose  disjoint closed discs $D'_v$ around each $f(v), v \in v(H_0)$, each intersecting $f(H_0)$ at a small star around $v$. Let $D_v$ be a disc strictly contained in $D'_v$.  For $n=1,2, \ldots$, having chosen $g(H_{n-1})$ and such discs around each vertex of $H_{n-1}$, we proceed to embed $H_n$. If $f$ embeds any vertices $w$ of $H_n - H_{n-1}$ inside some $D_v$, then we modify the part of $f$ intersecting $D'_v$ so as to push the image of $w$ into the annulus $D'_v \sm D_v$. Some easy topological details are hereby left to the reader.  We treat unwanted images of edges intersecting $D_v$ similarly. Finally, we pick discs $D_w$ around the new vertices $w$, disjoint from each other and any previously chosen $D_v$, and proceed to the next step. The limit embedding $g: \bigcup_n H_n \to \Sigma$ is generous around vertices by definition.

Starting with $g$ instead of $f$, and with a similar approach that this time fixes, for each edge $uv\in E(G)$, an open disc $D'_{uv}$ containing $g(uv) \sm (D_u \cup D_v)$, we transform $g$ into an embedding that is also generous around edges.
\end{proof}

\begin{proof}[Proof of \Lr{lem lf}]
Let $g: G \to \Sigma$ be a generous embedding (around vertices), as provided by  \Prr{generous}, with corresponding discs $D_v, v\in V(G)$.  Easily, we can assume that these discs are pairwise disjoint, by choosing sub-discs if needed. We will obtain $G'$ by blowing up each vertex $v\in V(G)$ into a tree $T_v$, embedded inside $D_v$. 

To make this precise, let $\mathbb{T}_2$ be the infinite binary tree with root $r$, and attach a leaf $\ell_w$ to each vertex $w$ of $\mathbb{T}_2$ to obtain the tree $T$. Embed $T$ in the closed unit disc $\mathbb{D}$ so that $\ell_w$ is embedded between the two edges of $w$ leading away from $r$ (\fig{figT}). Easily, we may map all the leaves $\ell_w$ of $T$ on the boundary $\partial \mathbb{D}$. Denote this embedding by $f_T: T \to \mathbb{D}$.
\begin{figure} 
\begin{center}
\begin{overpic}[width=.3\linewidth]{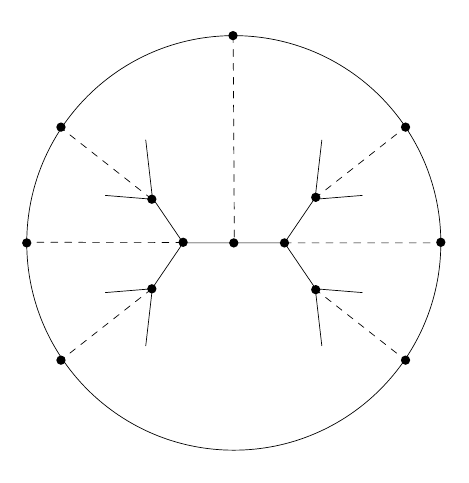} 
\put(47,41){$r$}
\put(47,95.5){$\ell_r$}
\put(11,5){$\partial \mathbb{D}$}
\put(47,15){$ \mathbb{D}$}
\end{overpic}
\end{center}
\caption{The first few layers of the embedding of $T$ in the proof of \Lr{lem lf}. Dashed lines denote  edges leading to leaves.} \label{figT}
\end{figure}

We remark that the ordering of the leaves of $T$ as they appear along the circle $\partial \mathbb{D}$ is of the same type as 
the ordering  of the rationals $\Q \cap [0,1]$ by $<$: between any two elements there is a third one. 

We construct the desired embedded \lf\ graph $G'$ as follows. For every $v\in V(G)$, let $\kreis{D}_v$ be a closed topological disc strictly contained in $D_v$. Delete $g(G) \cap \kreis{D}_v$, and embed a copy $T_v$ of $T$ into $\kreis{D}_v$ by composing $f_T$ with an isomorphism from $\mathbb{D}$ to $\kreis{D}_v$. Next, delete $g(G) \cap D_v \sm \kreis{D}_v$, and note that every edge $uv\in E(v)$ is now mapped by what remains of $g$ to an arc $A^1_u$ from $g(u)$ to a point $u'\in \partial D_v$ plus possibly some arcs with both edpoints on $\partial D_v$, which arcs we also discard. Let $u_1,u_2, \ldots$ be a (potentially finite) enumeration of $N(v)$. For each $i=1,2,\ldots$, choose an arc $A^2_{u_i}$ in $D_v \sm \kreis{D}_v$ joining $u'_i$ to the image of a leaf $\ell_i$ of $T_v$. Note that the union of $A^1_{u_i}$ with $A^2_{u_i}$ and the image $A^3_{u_i}$ of the $\ell_i$--$v$~edge is an arc $A_{u_i v}$ from $u_i$ to $v$, and we think of this arc as a replacement for the $u_i v$~edge of \G. Thus we want to make the above choices so that the $A_{u_i v}$ do not intersect except possibly at their endpoints. In particular, we need to choose pairwise distinct leaves $\ell_i$. These choices are however easy to make greedily: having chosen $A^2_{u_j}, j< i$, we use the remark about the ordering of the leaves of $T_v$ from the previous paragraph, to choose a yet unused leaf $\ell_i$, and an   $u'_i$--$\ell_i$ arc $A^2_{u_i}$ in $D_v \sm \kreis{D}_v$ that avoids  $A^2_{u_j}, j< i$.

Thus we have constructed a \lf\ graph $G'$ embedded into $\Sigma$. By construction, if we contract each $T_v$ into a vertex, then we obtain \G, which proves that $G < G'$ as claimed. Note that $G'$ has maximum degree 4 by construction, and it is easy to modify $T$ and $G'$ to reduce it to 3.
\end{proof}

For the proof of \Tr{thm CS} we will also use the following classical result of Youngs about cellular embeddings.

\begin{theorem}[\cite{Youngs}] \label{Youngs}
Let $\Sigma$ be a closed orientable surface, and let $G$ be a finite graph that embeds into $\Sigma$ but does not embed into a closed orientable surface of smaller genus. Then for every embedding $f: G \to \Sigma$, each face of $f$ is homeomorphic to an open disc. 
\end{theorem}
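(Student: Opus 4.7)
My plan is to argue by contradiction via a surgery on $\Sigma$: assuming some face $F$ of $f$ is not an open disc, I will cut $\Sigma$ along a simple closed curve inside $F$ and cap off to produce a closed orientable surface of strictly smaller genus that still carries an embedding of $G$, contradicting the minimality of the genus of $\Sigma$. We may assume $G$ is connected.

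The heart of the argument is to produce a simple closed curve $C\subset F$ that is non-separating in $\Sigma$. Since $\Sigma$ is orientable and $G$ is finite, $F$ is an open orientable surface of finite type, and failing to be a disc means that either $F$ has positive genus, or $F$ has $b\geq 2$ ``ends'' corresponding to the distinct facial walks of $f$ around $F$. In the first case I would take $C$ to be a non-separating simple closed curve on a handle of $F$; in the second, I would take $C\subset F$ separating the $b$ ends into two non-empty groups.

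With $C$ in hand, the surgery is standard: cut $\Sigma$ along a small annular neighbourhood of $C$ and cap each of the two resulting boundary circles with an open disc, obtaining a closed orientable surface $\Sigma'$. Since $C\subset F$ is disjoint from $f(G)$, the embedding $f$ passes unchanged to an embedding of $G$ into $\Sigma'$. If $C$ is non-separating in $\Sigma$, then $\Sigma'$ is connected and has genus one less than that of $\Sigma$, giving the desired contradiction.

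The main obstacle is verifying that $C$ is non-separating in $\Sigma$. Suppose for contradiction that $\Sigma\setminus C = A_1\sqcup A_2$. Since $F$ is open and $C\subset F$, both local sides of $C$ lie in $F$, so both $A_1$ and $A_2$ meet $F\setminus C$. In the first case (positive genus of $F$), $F\setminus C$ is connected, hence contained in a single $A_i$, contradicting this. In the second case, connectedness of $G$ together with $G\cap C=\emptyset$ gives $G\subset A_1$, say; but the facial walks of $F$ on the $A_2$-side of $C$ lie in $\overline{F\cap A_2}\cap G\subset \overline{A_2}\cap G = A_2\cap G = \emptyset$, contradicting that $C$ was chosen to place at least one facial walk on each side. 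Either way $C$ is non-separating, completing the proof.
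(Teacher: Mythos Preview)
The paper does not give its own proof of this statement; it is quoted as a classical result of Youngs and used as a black box in the proof of \Tr{thm CS}. Your surgery argument is the standard one and is essentially correct: classify the non-disc face $F$ by its genus and number of ends, produce a simple closed curve $C\subset F$ accordingly, verify that $C$ is non-separating in $\Sigma$, and cut--cap to drop the genus.

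One caveat. Your opening ``We may assume $G$ is connected'' is not a harmless reduction but a genuine hypothesis: the statement as printed is false for disconnected $G$ (two disjoint triangles embed with minimum genus in $\BS^2$ yet have an annular face). Youngs' theorem is really about connected graphs, and the paper only applies it to a finite $H\subset X$ which can be (and implicitly is) taken connected, so nothing downstream is harmed; but in your write-up connectedness should be stated as an assumption rather than a WLOG.
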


\begin{proof}[Proof of \Tr{thm CS}]
The idea for constructing a universal graph $U_\Sigma$ for $\cc_\Sigma$ is to fix a universal planar graph $P$, consider all possible finite graphs \g embeddable in $\Sigma$, attach a portion of $P$ along each face boundary of \g in each of the countably infinitely many possible ways, and take the disjoint union of the graphs arising from each possibility. Let us make this more precise.

Fix an embedding $f_P: P \to \BS^2$ of a $<$-universal planar graph $P$ as provided by \Tr{thm DK}. 
For every finite graph \g embeddable into a closed orientable surface of genus at most $\gamma(\Sigma)$, let  $\Sigma'$ be such a surface of minimum genus, and 
fix an embedding $f_G: G \to \Sigma'$. For every cycle $C$ of \g that bounds a face of $f_G$, choose a cycle $C'$ of $P$ with length $|C'|\geq |C|$. (Here, if a face $F$ of $f_G$ is not bounded by a cycle but by a closed walk with repeated vertices, we ignore $F$.) Choose one of the two sides $R$ into which $C'$ separates $P$ in $f_P$ by the Jordan curve theorem, and denote it by $P_{C}$; that is, $P_{C}:= f_P^{-1}(\cls{R})$. Form a minor $P'_{C}$ of $P_{C}$ by contracting some subarcs of $C'$ so that the resulting minor of $C'$ is a cycle $C''$ with $|C''| = |C|$. 
Then form an infinite graph $G'$ as follows. For each facial cycle $C$ of $G$ \wrt\ $f_G$, attach $P'_{C}$ onto $G$ by identifying $C$ with $C''$ using any of the $2 |C|$ possible automorphisms $i: C \to C''$ as the attachment map.

Note that there are countably infinitely many graphs $G'$ that arise this way: \ta\ $\omega$ ways to choose $C'$ for each of the finitely many facial cycles $C$ of \G, after which \ta\ finitely many ways to contract $C'$, and attach $P'_{C}$  onto \G. We claim that each such $G'$ is embeddable into $\Sigma'$, and therefore $\Sigma$. Indeed, each $P'_{C}$ is  planar and hence embeddable into the face $F_C$ of $f_G$ bounded by $C$. Here, we can use \Tr{Youngs} for convenience, to deduce that $F_C$ is homeomorphic with $\R^2$. By uniting $f_G$ with such embeddings of the $P'_{C}$ we obtain an embedding of $G'$ into $\Sigma'$.

Let $U'_G$ be the disjoint union of all these graphs $G'$, and let $U_G = \omega \cdot U'_G$ be the disjoint union of $\omega$ copies of  $U'_G$. Finally, let $U_\Sigma$ be the disjoint union of $U_G$ over all finite \g embeddable into $\Sigma$. By the above claim, we have $U_\Sigma \in \cc_\Sigma$.
\bigskip

It remains to prove that the graph $U_\Sigma$ we just constructed is $<$-universal for $\cc_\Sigma$. For this, let $Y\in \cc_\Sigma$ be connected. Let $X$ be a \lf\ connected graph \st\ $Y< X$, which exists by \Lr{lem lf}. We need to show that $X< U_\Sigma$.

Let $H$ be a finite subgraph of $X$ with $\gamma(H)=\gamma(X)$. Here, we used the well-known fact that a countable graph $X$ embeds into a surface $S$ if each of its finite subgraphs $H$ does, which can be proved along the lines of \Prr{generous}. We can assume for convenience that $H$ is an induced  subgraph of $X$. Let $g: X \to \Sigma'$ be an embedding into the closed orientable surface of genus $\gamma(X)$. Note that $g$ induces an embedding $g_H: H \to \Sigma'$. By \Tr{Youngs}, every face $F$ of $g_H$ is homeomorphic to an open disc. Therefore, the subgraph $X_F:= g^{-1}(F)$ of $X$ embedded into $F$ is planar.  We are hoping to find $X$ as one of the possible copies of $G'$ in our construction of $U_G$ above, but one of the obstacles is that some such faces $F$ may not be bounded by a cycle, meaning they have been ignored during the construction of $G'$, which may then fail to contain a copy of $X_F$.

To address this obstacle, we will \defi{`fatten'} $X$ to obtain a graph $X^\otimes$ embedded in $\Sigma'$ with better-behaved faces. 
We assume that $g$ is generous, which we can by \Prr{generous}, and let $\{D_v \mid v\in V(X)\}$ and $\{D_e \mid e\in E(X)\}$ be collections of discs witnessing the generosity of $g$. We may further assume that the $D_e$ are pairwise disjoint. 

\begin{definition} \label{def otimes}
{\textup{
Let $X''$ be  the multigraph obtained from $X$ by adding two  parallel edges $e',e''$ to each edge $e\in E(X)$. We can extend $g$ to embed $e',e''$ using the fact that the former is generous around edges: we can embed $e',e''$ into the disc $D_e$ so that the circle $e' \cup e''$ separates $e$ from the rest of $X$. Then, for each  $e$ of $X''$ with end-vertices $u,v$, subdivide $e$ into a path of length 3 by declaring two interior points  $e_u,e_v$ of $e$ to be vertices, so that $g(e_u)\in D_u$ and $g(e_u)\in D_v$. Finally, for each vertex $v$ of $X''$, and each two edges $e,f$ incident with $v$ that appear consecutively in $D_v$, add an edge between $e_v$ and $f_v$, and embed it inside $D_v$. We embed these edges in such a way that they form a circle $S_v$ separating $v$ from any other vertex of $X$. Let $X^\otimes$ denote the resulting graph, and $g'$ its embedding into $\Sigma'$; see \fig{figfat}. }}
\end{definition}
Note that $X$ is a minor of $X^\otimes$, obtained by contracting the edges inside each $S_v$ to a point. Thus all we have to do now is to find $X^\otimes$ as a minor of  $U_\Sigma$.

Let $H^\otimes$ be the subgraph of $X^\otimes$ corresponding to $H$, that is, the subgraph of $X^\otimes$ induced by $\{v\cup S_v \mid v\in V(H)\}$. Note that $H^\otimes$  has three types of faces in $g'$: (i) triangles comprising one edge of an $S_v$ and two edges of $v$, (ii) squares joining two $S_v$'s, and (iii) faces corresponding to some face of $g$. Thus by construction, every face $F$ of $H^\otimes$ is now bounded by a cycle $C_F$ of $H^\otimes$, in other words, the closure $\cls{F}$ is homeomorphic with a closed disc of $\Sigma'$. Let $X_F:= g^{'-1}(\cls{F})$ be the subgraph of $X^\otimes$ embedded in $\cls{F}$. Note that $X_F$ is connected, because both $X^\otimes$ and $C_F$ are. Moreover, $X_F$ is planar since it is embedded into the disc $\cls{F}$.

\medskip
We claim that $X^\otimes<U_\Sigma$. 
Recall that $U_\Sigma$ contains infinitely many graphs of the form $G'$, where $G$ is any finite graph embeddable into $\Sigma'$, and set $G= H^\otimes$.  Establishing the above claim is slightly more complicated than observing that each $X_F$ above is a minor of $P$, because we attached certain minors of $P$ to $G$ to construct $G'$, rather than $P$ itself. 
Another convenient feature of our construction of $X^\otimes$ is the following property:
\labtequ{star}{every vertex $v$ of $C_F$ has exactly one incident edge inside $F$ (and two other edges in $C_F$).}
Indeed, our $v$ lies on some $S_v$, and $F$ lies completely inside or outside $S_v$. (In the former case $X_F$ is just the triangle bounding $F$; it is the latter case that is interesting.)

Since $X_F$ is planar, there is a minor $\Delta$ of $P$ isomorphic to $X_F$. By \eqref{star}, each vertex $v$ of $C_F$ has degree at most 3 in $X_F$, and so we may assume that the corresponding branch set $B_v$ of $\Delta$ is a finite tree with exactly three leaves. Let $P_v$ be the subpath of this tree joining its two leaves participating in the copy of $C_F$ in $\Delta$. 
 Then $\bigcup_{v\in V(C_F)} P_v$ induces a cycle $C'_F$ in $P$. Note that $f_P(C'_F)$ separates $\BS^2$ into two regions by the Jordan curve theorem, and as $\Delta$ is connected, one of these regions $A$ contains $f_P(\Delta)$, where $f_P$ denotes our fixed embedding of $P$ from above. Recall that when we constructed $G'$, we attached a minor $P'$ of $P$ onto each face boundary $C$ of $G$, and notice that $f_P^{-1}(\cls{A})$ is a candidate for $P'$. Applying this argument to each face $F$ of $G$, we deduce that $X^\otimes$ is a minor of $U_G$, and hence of $U_\Sigma$ as claimed.

\medskip
Thus we have shown that every connected $Y\in \cc_\Sigma$ is a minor of $U_\Sigma$. By repeating the argument on each component, recalling that $U_\Sigma$ contains infinitely many isomorphic copies of each of its components, we obtain the same conclusion for disconnected $Y$ as well. Thus $U_\Sigma$ is a \mue\ of $\cc_\Sigma$.

\end{proof}

\begin{figure} 
\begin{center}
\includegraphics[width=0.9\linewidth]{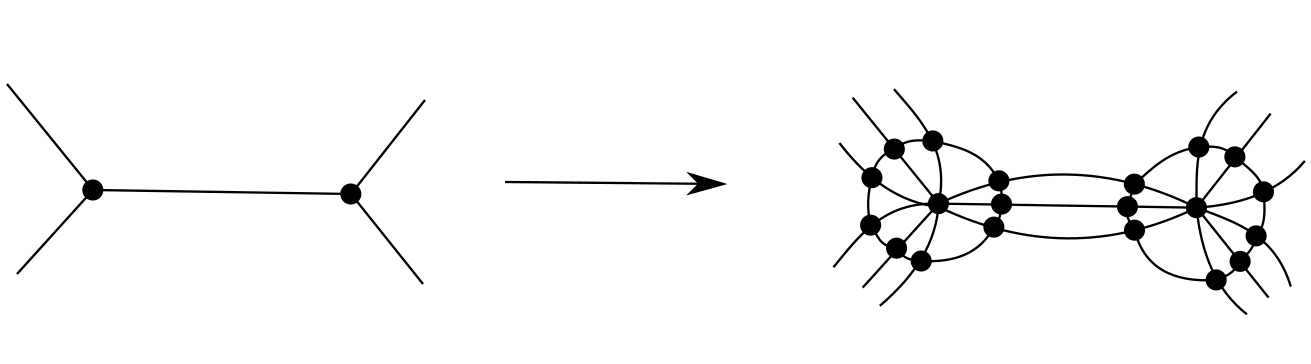} 
\end{center}
\caption{A portion of $X$ (left), and the corresponding part of $X^\otimes$ (right).} \label{figfat}
\end{figure}

\begin{remark}
The reason why we restrict \Tr{thm CS} to orientable $\Sigma$ is that \Tr{Youngs} fails in the non-orientable case; see \cite{PaPiPiVe} for details.
\end{remark}

Let $\Gamma$ denote the class of countable graphs embeddable in some closed orientable surface. In other words, $\Gamma$ comprises the graphs of finite, but unbounded, orientable genus.
Imitating the proof of \Tr{Kinfty}, we have
\begin{proposition}
$\Gamma$ does not have a \mue.
\end{proposition}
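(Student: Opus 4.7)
The plan is to imitate the proof of Theorem~\ref{Kinfty} in spirit, but the iteration collapses: any single hypothetical universal element of $\Gamma$ already has \emph{some} finite genus $g$, whereas $\Gamma$ itself contains graphs of arbitrarily large finite genus. So a one-shot contradiction suffices.

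Concretely, I would suppose for contradiction that $U \in \Gamma$ is a \mue\ for $\Gamma$, and fix an embedding of $U$ into a closed orientable surface $\Sigma$ of genus $g := \gamma(\Sigma)$. Next, I would pick a finite graph $F$ with $\gamma(F) > g$; this is possible because finite complete graphs have unbounded genus (e.g.\ $\gamma(K_n) = \lceil (n-3)(n-4)/12 \rceil$). Since $F$ is finite, $F \in \Gamma$, and in fact $U \dot{\cup} F \in \Gamma$: embed $U$ into $\Sigma$ and $F$ into a disjoint copy of its own orientable embedding surface, then take a connected sum to obtain a single closed orientable surface hosting both. By the assumed universality of $U$, we conclude $U \dot{\cup} F < U$, and in particular $F < U$.

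The final step is to push this $F$-minor down to a finite witness. Because $F$ is finite, only finitely many inter-branch edges $E_{uv}$ are used, and within each branch set one only needs finitely many paths linking the finitely many attachment points to a common root; the union of these paths and edges is a finite subgraph $U_0 \subseteq U$ already containing $F$ as a minor. Now $U_0$, being a finite subgraph of $U$, embeds into $\Sigma$, hence $\gamma(U_0) \leq g$, and therefore $\gamma(F) \leq \gamma(U_0) \leq g$, contradicting the choice $\gamma(F) > g$. The only point requiring (minimal) care is the reduction of an $F$-minor of an infinite graph to an $F$-minor of a finite subgraph, which is routine for finite $F$; beyond that, no real obstacle arises.
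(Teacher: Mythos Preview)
Your proof is correct, but it takes a different route from the paper's. You exploit directly that any fixed $U\in\Gamma$ has some finite genus $g$, while $\Gamma$ contains finite graphs of every genus; since genus is minor-monotone and a finite-graph minor of $U$ lives in a finite subgraph of $U$ (hence in $\Sigma$), this yields an immediate contradiction. (Your detour through $U\dot\cup F$ is unnecessary: $F\in\Gamma$ already gives $F<U$.) The paper instead iterates in the style of \Tr{Kinfty}: from $U\dot\cup K_5\in\Gamma$ it extracts a $K_5$ minor $K_0$ inside $U$, observes that $U\sm K_0$ is still universal, and repeats to build $\omega\cdot K_5$ as a minor of $U$; but $\omega\cdot K_5\notin\Gamma$. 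Your argument is shorter and more elementary; the paper's has the virtue of reusing the same mechanism as \Tr{Kinfty} and \Cr{cor}, and it pins down a concrete infinite obstruction ($\omega\cdot K_5$) rather than an unspecified large-genus finite graph.
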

\begin{proof}
Suppose $U$ is a universal graph in $\Gamma$, and let $U':= U \dot{\cup} K_5$. Notice that $U'\in \Gamma$, and therefore $U'< U$. Let $K_0$ be a minor of $K_5$ that extends to a minor of $U'$ in $U$.  Apply the same argument on $U \sm K_0$, and repeat, to find $K:= \omega \cdot K_5$ as a minor of $U$. But this is impossible since $K\not\in \Gamma$, a contradiction proving that $U$ does not exist.
\end{proof}

Let $\Gamma^*$ denote the class of countable graphs 
 every component of which lies in $\Gamma$. Then $\Gamma^*$ does have a \mue, namely the disjoint union of all the universal graphs provided by  \Tr{thm CS}, when $\Sigma$ ranges over all closed orientable surfaces.

\subsection{2-compexes}

The proof of \Lr{lem lf} is likely to be adaptable to higher dimensions, in particular to embeddings of 2-complexes into $\R^3$. This may be helpful in addressing the following:

\begin{question} \label{Q 3D}
Does the class of countable 2-complexes embeddable into $\R^3$ (or $\R^4$) have a \mue?
\end{question}

Here, the notion of minor we refer to is that of Carmesin \cite{CarEmbI}, who obtained a characterisation of the finite 2-complexes embeddable into $\R^3$ reminiscent of Kuratowski's theorem. The interested reader is advised to also consult \cite[Theorem 3.2]{GeoKon}. 
 
The analogue of \Qr{Q 3D} can be asked for the sub-family of those 2-complexes embeddable into $\R^3$ without accumulation points of vertices. It might be that a disjoint union of copies of the 3-dimensional cubic lattice is a universal element, in analogy with K\"uhn's result \cite{KuhMin}.

\section{Interlude: every embedded graph has a 3-connected supergraph} \label{sec int}

In this section we introduce another way to `fatten' an embedded graph similar to $X^{\otimes}$ from \Dr{def otimes}, but more convenient for the purposes of the proof of \Tr{thm K5} in the next section.
The idea is to attach a sequence of triangles along each facial walk (\fig{figFG}). This is a very simple construction when $X$ is e.g.\ a finite plane graph, and we now provide the details showing that it can be carried out for \lf\ $X$ embedded in any surface. Formally, we define our new `fattening' $F(X)$ as follows.
\begin{figure} 
\begin{center}
\includegraphics[width=0.8\linewidth]{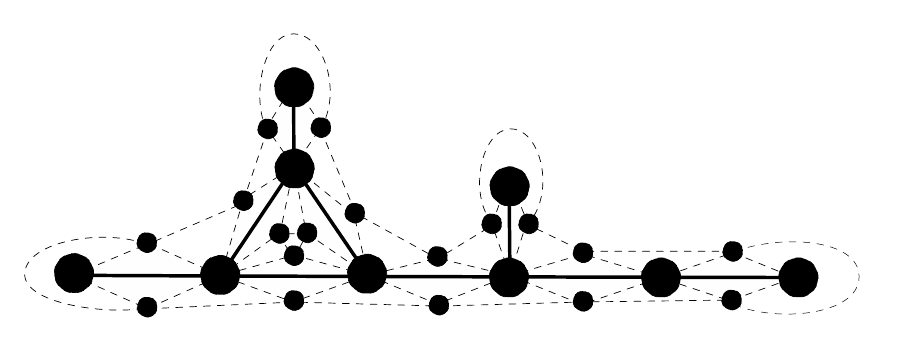} 
\end{center}
\caption{An example of a plane graph \g (solid lines) and its `fattening' $F(G)$ (dashed lines).} \label{figFG}
\end{figure}
\begin{definition} \label{def fat} {\textup{
Let $X$ be a \lf\ graph and  $g: X \to \Sigma$ a  generous embedding into some surface $\Sigma$, the generosity of which is witnessed by collections of discs $\{D_v \mid v\in V(X)\}$ and $\{D_e \mid e\in E(X)\}$.
We define a super-graph $F(X)$ of $X$  embedded in  $\Sigma$ as follows. For each edge $e=uv\in E(X)$, we embed a square $Q_e$ with vertices $u t_1 v t_2$ into $D_e$, where $t_1,t_2$ are new vertices. We embed $Q_e$ so that $e$ lies in its interior. Thus $D_e$ now contains two triangles $T_1=u t_1 v, T_2=u t_2 v$ sharing $e$. We call $t_i$ the \defi{tip} of $T_i$. Having done so \fe\ $e=uv\in E(X)$, we introduce some further edges between tips of adjacent $T_i$'s as follows. 
For every $v\in V(X)$, and any two tips $t,s$ \st\ the edges $vt,vs$ are consecutive in our embedding, we embed a $ts$ edge into $\Sigma$. We thereby make sure that the image of $ts$ does not intersect any other edge, new or old. (To see that this is possible, extend $D_v$ to a topological disc $D'_v$ that contains all tips adjacent to $v$, and embed each $ts$ inside $D'_v$.)
Let $F(X)$ denote the resulting graph, and $g'$ its embedding into $\Sigma$; see \fig{figFG}}. 
}
\end{definition}
We remark that for each (finite or infinite) facial walk $W=  \ldots e_0 e_1 \ldots$ bounding a face $A$ of $X$ in $g$, there is a facial path of $F(X)$ in $g'$ obtained by replacing each $e_i$ by the tip of its triangle embedded in $A$. 

\medskip
We make the following observation about $F(X)$ that may be of independent interest. It simplifies and generalises a construction of \cite{Kleinian} (Lemma 5.3).
\begin{proposition} \label{pr fat}
Let $G \subset \Sigma$ be a connected \lf\ graph embedded in any surface. Then $F(G)$ is 3-connected.
\end{proposition}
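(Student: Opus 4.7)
The plan is to prove that for any two distinct vertices $a, b \in V(F(G))$ the graph $F(G) - \{a, b\}$ is connected. Together with the observation $|V(F(G))| \geq 4$ (which holds as soon as $G$ has at least one edge, since each edge contributes two tips), this establishes 3-connectivity.

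The key structural device I will introduce is a \emph{crown cycle} around each original vertex. For $v \in V(G)$ with $d := \deg_G(v)$, let $u_1, \ldots, u_d$ be its neighbours in the cyclic order induced by $g$, and let $t_i^-, t_i^+$ denote the two tips of the edge $vu_i$, labelled so that $t_i^+$ and $t_{i+1}^-$ lie in a common sector at $v$ and are therefore joined by a tip--tip edge added in the construction of $F(X)$. Then
\[
C_v \;:=\; t_1^+\, t_2^-\, u_2\, t_2^+\, t_3^-\, u_3\, \cdots\, u_d\, t_d^+\, t_1^-\, u_1\, t_1^+
\]
is a cycle of length $3d$ in $F(G) - v$, alternating between the $d$ tip--tip edges added around $v$ and the $2d$ triangle edges of the squares $Q_{vu_i}$, and visiting \emph{every} neighbour of $v$ in $F(G)$. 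Thus $C_v$ is a ready-made bypass around $v$.

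With this tool, I will do a case analysis on the types of $a, b$. When $a = v$ and $b = w$ are both originals, each of $C_v$, $C_w$ lies in $F(G) - \{v, w\}$ after deleting at most one vertex (namely the other removed vertex, if it lies on the cycle), and so remains connected. These two (near-)cycles are joined to each other either by the shared tips of the edge $vw$ when $vw \in E(G)$, or by the interior of any $v$--$w$ path in $G$, which exists by connectivity of $G$ and avoids the endpoints. Every component of $G - \{v, w\}$ contains a neighbour of $v$ or of $w$ (again by connectivity), and each such neighbour lies on $C_v \cup C_w$; meanwhile every remaining tip is attached via a triangle edge to an original vertex in $V(G) \setminus \{v, w\}$. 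When $a = v$ is original and $b = t$ is a tip, $C_v - t$ is still a cycle or a path (depending on whether $t$ is a neighbour of $v$), and the loss of $t$ is absorbed locally: the two endpoints of $t$'s edge remain joined by the original edge of $G$, and the two tips adjacent to $t$ attach to originals through their triangles. When both $a, b$ are tips, the subgraph $G \subset F(G)$ is intact in $F(G) - \{a, b\}$, and every remaining tip is attached via a triangle edge to an original of $G$.

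The main obstacle I anticipate is the bookkeeping in degenerate sub-cases where the two removed vertices are very close: two adjacent originals whose crown cycles meet in a single pair of tips; a tip and one of its incident originals; the two tips of a single edge; or two tips joined by a tip--tip edge. In each such sub-case the loss of one local bypass is compensated either by the crown cycle of an adjacent vertex, the direct edge of $G$ between the endpoints of the tip's edge, or the facial path running along a face incident to the removed pair.
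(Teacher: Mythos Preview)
Your proposal is correct. You identify the same key structural object as the paper: the cycle through all neighbours of $v$ in $F(G)$ (your crown cycle $C_v$). The paper observes that $C_v$ together with $v$ spans a wheel, so each closed neighbourhood $G_v := F(G)[\{v\} \cup N(v)]$ is itself 3-connected; then for adjacent $u, v \in V(G)$ the two wheels $G_u, G_v$ share at least four vertices ($u$, $v$, and the two tips of $uv$), so no two vertices can separate $G_u$ from $G_v$; chaining along a path in $G$ gives 3-connectivity of $F(G)=\bigcup_v G_v$ in one line. You instead use $C_v$ directly as a bypass around a deleted original vertex and run a case analysis on whether the two removed vertices are originals or tips. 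Both arguments work. The paper's route is shorter and swallows all the degenerate sub-cases you flag in your last paragraph into the single observation ``two 3-connected pieces sharing four vertices cannot be separated by two vertices''; your route is more hands-on and makes explicit where each surviving vertex reattaches, at the cost of the bookkeeping you anticipate.
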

We emphasize that \g is simple; the result fails in the presence of either loops or parallel edges. We allow $\Sigma$ to be non-orientable and non-compact.
\begin{proof}
For $v\in V(G)$, let $C_v$ denote the subgraph of $F(G)$ induced by the neighbours of $v$. Note that $C_v$ has a hamilton cycle, obtained by visiting the neighbours of $v$ in the order their edges are embedded around $v$ in $\Sigma$. Thus the subgraph $G_v$ of $F(G)$ induced by $C_v \cup \{v\}$ has a spanning \defi{wheel}, and in particular $G_v$ is 3-connected. ($G_v$ is isomorphic to a wheel when $v$ lies in no triangle of $G$.)
 
For $e=vu\in E(G)$, the \defi{closed neighbourhood} $N_e:= G_v \cup G_u$ of $e$ contains the union of two wheels sharing at least 4 vertices (namely $u,v$, and the two tips of $e$). Thus $G_v$ cannot be separated from $G_u$ by any two vertices. Since \g is connected, it follows that $G= \bigcup_{v\in V(G)} G_v$ is 3-connected.
\end{proof}

\section{A universal $K_5$-free (and $K_{3,3}$-free) graph} \label{sec K5}

In this section we prove the most difficult theorem of this paper:

\begin{theorem} \label{thm K5}
The class of countable graphs with no $K_5$ minor has a \mue.
\end{theorem}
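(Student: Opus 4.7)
The plan is to combine three ingredients: the universal planar graph $P$ of \DK\ (\Tr{thm DK}), Wagner's theorem that every $K_5$-minor-free graph is a $3$-clique-sum of planar graphs and copies of the Wagner graph $V_8$, and its extension to the countable case by \kriz\ and Thomas \cite{KriThoCli}. The latter presents every countable $K_5$-minor-free graph $H$ as a ``tree of clique-sums'' $\cd_H=(T_H,(B_t)_{t\in V(T_H)})$ whose bags $B_t$ are either planar or isomorphic to $V_8$, and adjacent bags are glued along a common clique of size at most $3$.

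First I would build the target graph $U$ as a ``universal tree of clique-sums''. Start with a root bag $B_\varnothing$ that is a copy of $P$. For every clique $K\subseteq B_\varnothing$ of size at most $3$ and every choice of bag ``type'' (a fresh copy of $P$ or of $V_8$), attach countably infinitely many fresh bags by identifying $K$ with a clique of the new bag using each of the finitely many possible isomorphisms, each identification realised by infinitely many copies. Iterate the same procedure at every newly attached bag, ad infinitum; the result is a countable graph $U$. That $U\in \forb{K_5}$ follows from the standard fact, extended to the infinite setting in \cite{KriThoCli}, that clique-sums of $K_5$-minor-free graphs along cliques of size at most $3$ are $K_5$-minor-free: any hypothetical $K_5$ minor would live in a finite subgraph and hence in only finitely many bags, so it could be routed through a separating clique of size at most $3$, which is impossible.

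To prove universality, take a countable $H\in\forb{K_5}$ with decomposition $\cd_H$. The $H$-minor of $U$ is built inductively by embedding $\cd_H$ into the decomposition tree of $U$. Pick a root bag of $\cd_H$, realise it as a minor of the root copy of $P$ in $U$ (using \Tr{thm DK} in the planar case, trivially in the $V_8$ case), and record, for each gluing clique shared with a neighbour in $\cd_H$, its image as a clique of $U$. Recursively, for each child bag $B'$ of a processed bag $B$, use one of the still-unused $P$- or $V_8$-children of the chosen $B^U$ attached along the image of the appropriate gluing clique, and realise $B'$ inside it. The infinite supply of children of each type at every $B^U$, along every clique, lets this greedy bag-by-bag embedding proceed; since the branch sets assigned to distinct bags of $\cd_H$ overlap only along the agreed-upon gluing cliques, they combine into a single $H$-minor of $U$.

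The hard part will be enforcing that a gluing clique of a child bag $B'$ is realised as a genuine clique in the chosen $P$-bag, rather than merely as pairwise adjacent branch sets. \Tr{thm DK} yields $B'$ only as an abstract minor of $P$, whereas here I need the branch sets of a gluing triangle to meet along actual edges so that the next clique-sum in $U$ can be performed. To handle this I would first replace each planar bag of $\cd_H$ by its fattening from \Sr{sec int} before applying the decomposition, so that every triangle that will later serve as a gluing clique appears as an actual triangle in a $3$-connected planar embedded supergraph (\Prr{pr fat}); such a triangle is then realisable as a triangle in a minor of $P$, and the $3$-connectivity provided by \Prr{pr fat} ensures that the clique-sum proceeds cleanly, yielding the required bounded-degree control that is also needed for \Cr{cor K5}.
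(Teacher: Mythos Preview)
Your overall strategy matches the paper's: build $U$ as a ``universal tree of $\leq 3$-clique-sums'' of copies of a universal planar graph and of $V_8$, then embed the \kriz--Thomas tree decomposition of any $H\in\forb{K_5}$ into the tree decomposition of $U$ bag by bag. You also correctly pinpoint the crux: when the adhesion between two bags of $H$ is a triangle, the minor embedding of a planar bag into a copy of $P$ only produces a $K_3$-\emph{minor}, not a subgraph-triangle, so there is nothing in $U$ to glue the next bag onto.

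The gap is in your proposed fix. Fattening the bags of $H$ does not help: after fattening, the gluing triangle in the bag is indeed a genuine triangle, but when you realise that fattened bag as a minor of $P$ the triangle is still only sent to three branch sets with three branch edges, i.e.\ again just a $K_3$-minor of $P$. Nothing forces it to land on an actual triangle of $P$; indeed the \DK\ graph $P$ may have no triangles at all, in which case your $U$ has no $3$-sums to offer. The fattening has to be applied on the \emph{target} side: the paper sets $P:=F(P_0)$ for a (locally finite) universal planar $P_0$, so that $P$ has plenty of triangles, and then proves the key property \eqref{triang}: for every minimal $K_3$-minor $\Delta$ of $P_0$ and either side $R$ of $C(\Delta)$, there exist a genuine triangle $\Delta'\subset P$ inside $\cls{R}$ and three disjoint paths in $\cls{R}$ linking the vertices of $\Delta'$ to the three branch sets of $\Delta$. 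One embeds each planar bag $X_i$ as a minor of $P_0\subset P$, uses that $X_i$ is a $3$-simplex so that the adhesion triangle is facial (hence one side of its image is free), and then invokes \eqref{triang} to extend the three relevant branch sets out to an actual triangle $\Delta'$ of $P$; the next bag is attached to a fresh copy of $P$ glued along $\Delta'$. The $3$-connectedness of $F(P_0)$ from \Prr{pr fat} is incidental; what is actually used is the explicit wheel structure around each vertex of $F(P_0)$ to route the paths of \eqref{triang}.
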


A classical theorem of Wagner \cite{Wagner} says that a finite graph \g is $K_5$-minor-free if and only if \g can be built from planar graphs and the \defi{Wagner graph} by repeated 3-clique sums, where the Wagner graph $W$ is the one depicted in \fig{Wagnergraph}. 

\begin{figure} 
\begin{center}
\includegraphics[width=0.2\linewidth]{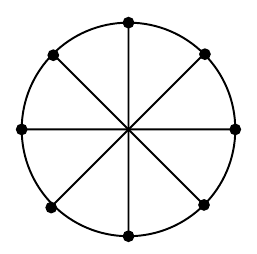} 
\end{center}
\caption{The Wagner graph.} \label{Wagnergraph}
\end{figure}

\kriz\ \& Thomas \cite{KriThoCli} (\Tr{kriz} below) extended Wagner's result to infinite graphs. Our construction of a universal graph $U$ proving \Tr{thm CS} will use this decomposition: the idea is to construct $U$ by repeated 3-clique sums of a universal planar graph $P$ and copies of $W$.  However, this will not work if we let $P$ be any universal planar graph. Suppose, for example, $H$ is a 3-sum of two planar graphs $G_1,G_2$, and $K \subset G_1 \cap G_2$ is a common triangle. Although we can find both $G_1,G_2$ as minors of $P$, it might be that $H$ is not a minor of any 3-sum of two copies of $P$, just because $P$ may have no triangles. Thus we need to modify $P$ to ensure that it does have a triangle wherever one is needed. This may look hopeless at first sight, but it turns out there is a surprisingly simple construction that achieves it ---although proving that it does requires some effort. Once we have constructed $P$ carefully enough that any $H$ as above is a minor of some 3-sum of two copies of $P$, it will not be hard to show that any $K_5$-minor-free graph is a minor of a certain repeated 3-sum of infinitely many copies of $P$ and $W$.

We start our construction of $P$ by letting $P_0$ be a universal planar graph, e.g.\ the one of \DK\ \cite{DieKuhUni}. By \Lr{lem lf}, we may assume that $P_0$ is \lf. Pick any embedding $f_0: P_0 \to \BS^2$, and let $P=F(P_0)$ be the corresponding fattening. Thus $P$ is a \lf\ universal planar graph with plenty of triangles. We think of $P$ as a plane graph with an embedding $f_P$ induced by $f_0$ (in fact $P$ is 3-connected by \Prr{pr fat}, so its embedding is known to be essentially unique \cite{ImWhi,Kleinian}, but we will not use this fact). As mentioned above, we need something stronger than the existence of triangles in $P$; we will need the following property \eqref{triang}. 
A \defi{minimal $K_3$ minor}  of a graph $P$ is  
a family $\Delta$ of three disjoint (possibly trivial) paths $M_1,M_2,M_3 \subset P$, called the branch sets of $\Delta$, and three edges $e_1,e_2,e_3$ joining the $M_i$'s so that $\bigcup_{1\leq i \leq 3} (M_i \cup e_i)$ is a cycle $C(\Delta)$.
%
\labtequ{triang}{For every minimal $K_3$ minor $\Delta$ of $P_0$, and any of the two regions $R$ into  which $C(\Delta)$ separates $\BS^2$, there is a triangle $\Delta'\subset P=F(P_0)$ contained in $\cls{R}$, and three disjoint paths $P_i, 1\leq i\leq 3$ joining each vertex of $\Delta'$ to a distinct brach set of $\Delta$ (\fig{figD}).}
\begin{figure} 
\begin{center}
\begin{overpic}[width=.3\linewidth]{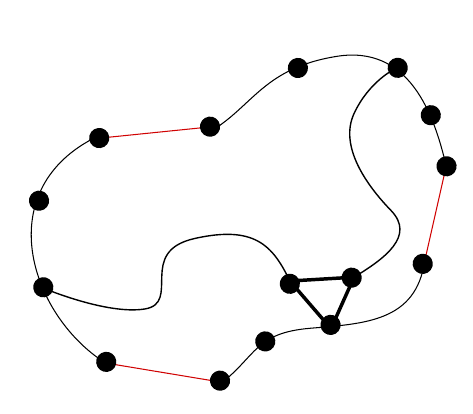} 
\put(45,40){$P_2$}
\put(70,8){$P_3$}
\put(65,53){$P_1$}
\put(63,30){$\Delta'$}
\put(31,2){$e_1$}
\put(94,37){$e_3$}
\put(28,64){$e_2$}
\end{overpic}
\end{center}
\caption{A minimal $K_3$ minor (exterior cycle) and the triangle $\Delta'$ (bold lines) as in \ref{triang}.} \label{figD}
\end{figure}
We emphasize that $\Delta'$ is a subgraph of $P$, not just a $K_3$ minor. Before we prove \eqref{triang}, let us see how it helps. Recall our above example $H$, namely a 3-sum of two planar graphs $G_1,G_2$ along a common triangle $K$. As a warm-up towards the proof of \Tr{thm K5}, we will show that $H$ is a minor of a 3-sum of two copies $P_1,P_2$ of $P$. To see this, let $H_i$ be a $G_i$ minor of $P_i$ for $i=1,2$. Let $K_i$ be the part of $H_i$ corresponding to $K$. Pick a subpath of each of the three branch sets of $K_i$ to form a minimal $K_3$ minor $\Delta_i$ of $P_i$. Assume for simplicity that $K$ bounds a face in any embedding of $G_i$ (this can be achieved, because if $K$ separates $G_1$, then we can further decompose $G_1$ as a 3-sum). Then we can choose the $\Delta_i$  so that one of the two sides $R_i$ of $C(\Delta_i)$ does not intersect the rest of $H_i$. Apply \eqref{triang} to each triple $\Delta_i, P_i, R_i$ to obtain two triangles $\Delta_i'\subset P_i$ and their corresponding paths $P_{ij}, i\in \{1,2\},  j\in \{1,2,3\}$. Identify $\Delta_1'$ with $\Delta_2'$ to obtain a 3-sum $Q$ of $P_1,P_2$. (To picture this, imagine a copy of \fig{figD} as a substructure of each $P_i$, with the two copies of $\Delta'$ glued together.) Extend $H_1$ by attaching one of the $P_{1j}$ to each of the three branch sets of $K_1$. Do the same for $H_2$. By uniting  $H_1$ with  $H_2$ we thus find a minor of $Q$ isomorphic to $H$.

\begin{proof}[Proof of \eqref{triang}]
Given a cycle $C$ of a plane graph \G, and one of the sides $A$ into which $C$ separates the plane, we say that $C$ is \defi{induced inside $A$}, if no chord of $C$ lies in $A$. 

Let us first consider the case where $C=C(\Delta)$ is induced inside $R$. Let $B$ be one of the branch sets of $\Delta$ ---hence $B$ is a sub-path of $C$--- and let $e=uv$ be the edge of $C$ joining the other two branch sets of $\Delta$ (\fig{figQ}). Let $t$ be the tip of $e$ inside $R$. The desired triangle $\Delta'$ is $uvt$. Two of the desired paths $P_1,P_2$ are the singletons $\{u\}$ and $\{v\}$, and so it remains to find a $t$--$B$~path $P_3$ in $R$ avoiding $u,v$. For this, pick a vertex $x$ in $B$, let $Q$ be one of the two $x$--$e$~paths in $C$ and let $t'$ be the tip of the first edge of $Q$. Recall from \Prr{pr fat} that the closed neighbourhood $G_w$  of every $w\in V(P_0)$ has a spanning wheel, and note that $G_w \cap R$ contains a path $Q_w$ joining the tips of the two edges of $w$ in $C$. By concatenating the $Q_w$ as $w$ moves along $Q$ we obtain a $t$--$t'$~path in $R$, and by appending the edge $t'x$ we obtain the desired $t$--$x$~path $P_3$.
\begin{figure} 
\begin{center}
\begin{overpic}[width=.55\linewidth]{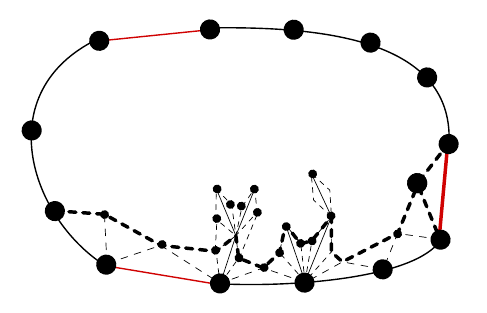} 
\put(1,30){$B$}
\put(35,17){$P_3$}
\put(94,14){$v$}
\put(95,34){$u$}
\put(81,26){$t$}
\put(22,22){$t'$}
\put(5,21){$x$}
\put(50,0){$Q$}
\end{overpic}
\end{center}
\caption{Finding $P_3$ in the proof of \eqref{triang}.} \label{figQ}
\end{figure}
It remains to handle the general case where $C=C(\Delta)$ is not induced inside $R$, but it is possible to reduce it to the induced case as follows.
Given a plane graph $G$, a minimal $K_3$ minor $\Delta_0$ of $G$, one of the two regions $A$ into which $C(\Delta_0)$ separates $\BS^2$, and a chord $h$ of $C(\Delta_0)$ in $A$, we can use $h$ as a shortcut of $\Delta$ to replace it by a minimal $K_3$ minor $\Delta_1$ of $G$ inside $A$ each branch set of which is contained in a distinct branch set of $\Delta_0$, and at least one of these containments is proper. We repeat as long as possible, ending up with a $\Delta_k$ which is induced inside a subregion $R$ of $A$. Thus replacing $\Delta$ with $\Delta_k$ we can reduce to the induced case handled above. Note that each branch set of $\Delta_k$ is contained in a distinct branch set of $\Delta=\Delta_0$ by induction, and so the $P_i$'s we obtain do reach $\Delta$.
\end{proof}

Our universal graph will be constructed so that it comes with a tree decomposition into parts each of which is a copy of either $P$ or the Wagner graph $W$. Let us recall the relevant definitions.

\medskip 

A  \defi{tree decomposition} of a graph \G\ is a pair $(T, X)$, where $T$ is a tree and $X = \{X^t \mid t\in V(T) \}$ is a family of subgraphs of \G, called \defi{torsos}, such that
\begin{itemize}
	\item[(T1)] \label{TD i}  $\bigcup_{t\in V(T)} X^t= V(G)$, 
	\item[(T2)] \label{TD ii}  every edge $e$ of \G\ lies in some $X^t$, and
	\item[(T3)] \label{TD iii} if $t, s, q \in V(T)$ and $s$ is on the path between $s$ and $q$, then $X^t \cap X^{q} \subseteq X^{s}$.\\

We say that a tree-decomposition $(T, X)$ is a \defi{$k$-decomposition} of $G$ over a graph class $\Gamma$, if 
	\item[(T4)] \label{TD iv} $|V(X^t \cap X^{s})| \leq k$ for every ${ts} \in E(T)$, and,\\ 
	 
letting $G^*$ be the graph obtained from \G\ by joining every edge which has both its endpoints in some $X^t \cap X^{s}$ for some ${ts} \in E(T)$,
	\item[(T5)] \label{TD v} $G^* \restr V(X^t) \in \Gamma$ for every $t \in V(T)$.
\end{itemize}

Let $G, G'$ be two graphs, and let  $K \subseteq G, K'\subseteq G'$ be cliques of equal size. A \defi{clique-sum} of $G, G'$ is a graph $H$ obtained from their disjoint union by identifying $K$ with $K'$ via a bijection of their vertices, and then possibly deleting any of the edges of the resulting clique $K=K'$ of $H$. A \defi{$k$-clique-sum}, or just \defi{$k$-sum}, is a clique-sum in which these cliques have at most $k$ vertices.

A graph \g is called a \defi{$k$-simplex} if it does not contain a $k$-cut, i.e.\ a clique on at most $k$ vertices the removal of which disconnects \G. 

Let $\Gamma$ denote the class of countable graphs that are either planar or isomorphic to $W$.
\medskip

Our universal graph $U$ will come with a $3$-decomposition $(S,Y)$ over $\Gamma$, where $S$ is the regular tree of degree $\omega$, and each $Y^t$ is isomorphic to either $P$ or $W$. We will define $U$ in such a way that makes it maximal with these properties in the sense that any other such graph is a subgraph of our $U$, although we will not formally state or use this maximality property. 

Our construction of $U$ follows the lines of Mohar's tree amalgamation \cite{MohTre}, but with certain differences that make it more restricted in some sense and more general in another. The idea is to keep forming 3-sums of infinitely many copies of $P$ and $W$ in all possible ways. Our formal definition of $U$ is as follows.


\begin{definition} \label{def U}
\textup{Let $S$ be a tree every vertex of which has countably infinite degree (this fixes $S$ up to isomorphism). Let $k$ be a colouring of $V(S)$ by two colours, red and blue say, \st\ every vertex has infinitely many neighbours of each colour. For each $v\in V(S)$, we let $Y^v$ be a copy of $P$ if $k(v)$ is blue, and a copy of $W$ if $k(v)$ is red. Moreover, we label each directed edge $uv$ of $S$ by an isomorphism $c_{uv}$ from some clique $K$ of $Y^u$ with at most 3 vertices to a clique of $Y^v$. Thus $K$ is a triangle, edge, or vertex, and if  $Y^u$ or $Y^v$ is a copy of $W$, it must be an edge or vertex. We choose the labelling $c$ in such a way that for every $u\in V(S)$, every clique $K$ of $Y^u$ with at most 3 vertices, and every isomorphism $h$ from $K$ to a subgraph of $P$ or $W$, there are infinitely many outgoing edges $uv$ \st\ $c_{uv}=h$ (whereby $v$ is blue if $h(K)$ is a subgraph of $P$, and red otherwise). It is easy to construct such a labelling greedily, starting from a `root' vertex of $S$.}

\textup{To define $U$, we take the disjoint union of $\{Y^v \mid v\in V(T)\}$, and then for every directed edge $uv$ of $S$, we identify each vertex $x$ in the domain of $c_{uv}$ with $c_{uv}(x)$.
}
\end{definition}

This completes the construction of $U$. Clearly, $(S,Y)$ is a 3-decomposition of $U$. To establish \Tr{thm K5} it now remains to prove

\begin{proposition} \label{prop U}
For every countable graph \g with no $K_5$ minor, $G< U$ holds.
\end{proposition}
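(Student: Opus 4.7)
The plan is to use the Kří\v{z}--Thomas extension of Wagner's theorem to obtain a tree-decomposition $(T,X)$ of $G$ in which $T$ is countable, adhesion sets are cliques of size at most $3$, and each torso $G^* \restr V(X^t)$ is either planar or isomorphic to $W$. (A minor point to verify is that for countable $G$ such a decomposition has countable $T$; this follows from the usual arguments, taking connected components and noting that each torso has countably many vertices.) We then aim to embed $(T,X)$ into the decomposition $(S,Y)$ of $U$: pick a root $t_0$ of $T$, map it to a vertex $s_0 \in V(S)$ of the appropriate color (blue if $X^{t_0}$ is planar, red if it is Wagner), and recursively map neighbours using the fact that every directed edge of $S$ with every possible labelling $c_{uv}$ occurs infinitely often at every vertex. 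Call this map $f : V(T) \to V(S)$.

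Next, we construct the desired $G$-minor in $U$ recursively along $T$. At each torso $X^t$ we want to realise $G^* \restr V(X^t)$ as a minor of $Y^{f(t)}$, in such a way that for each neighbour $t'$ of $t$ in $T$, the branch sets corresponding to the adhesion clique $X^t \cap X^{t'}$ meet the image of $c_{f(t)f(t')}$ at a matching clique. If $X^t$ is a Wagner torso, then $Y^{f(t)}$ is literally a copy of $W$ and the adhesion cliques are edges or vertices, so the required minor is just $X^t$ itself embedded into $Y^{f(t)}$. If $X^t$ is planar, we apply $<$-universality of $P$ to the planar graph obtained from $G^* \restr V(X^t)$ by distinguishing each adhesion clique; however, we must strengthen the statement to accommodate the compatibility constraints with neighbouring torsos. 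This is exactly where property~\eqref{triang} will enter.

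The key technical step is therefore the following lemma, which I would prove separately: given a planar graph $H$ with finitely or countably many distinguished cliques $K_1, K_2, \ldots$ of size $\le 3$ such that each $K_i$ lies on the boundary of a face of some planar embedding of $H$ (and the $K_i$'s lie on pairwise distinct such faces), there is a minor of $H$ in $P$ in which each $K_i$ is realised as an actual clique $K_i' \subset P$ that bounds a designated triangle (or edge/vertex) of $P$ in the sense of~\eqref{triang}. To prove it, embed $H$ in the plane with the $K_i$'s facial, find $H$ as a minor of $P_0$ using universality, use the freedom to modify the branch sets of the $K_i$'s into minimal $K_3$ minors with the rest of $H$ lying on one chosen side, and then invoke~\eqref{triang} on the other side to produce $K_i'$ together with the three disjoint paths $P_{i,1}, P_{i,2}, P_{i,3}$ connecting $K_i'$ to the branch sets of $K_i$; attach these paths to the branch sets to complete the minor. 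This is precisely the warm-up argument given right after~\eqref{triang}, generalised to arbitrarily many distinguished cliques on distinct faces.

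Granting this lemma, the recursive construction proceeds as follows. Having realised $G^* \restr V(X^t)$ as a minor of $Y^{f(t)}$ with each adhesion clique $K_{tt'}$ realised as a designated clique $K'_{tt'}$ of $Y^{f(t)}$, we use the fact that the labelling $c$ of $S$ realises every clique-isomorphism infinitely often to choose, for each child $t'$ of $t$, an outgoing edge $f(t)f(t')$ with $c_{f(t)f(t')}$ equal to the isomorphism from $K'_{tt'}$ to the corresponding clique in $Y^{f(t')}$. By~\eqref{triang} applied to the other side, the triangle (or edge/vertex) at the $Y^{f(t')}$ end can itself be used as a designated clique for the torso at $t'$, and we iterate. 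Taking the union over all $t \in V(T)$ of the branch sets and connecting edges produces the desired $G$-minor in $U$. The main obstacle is the coherent gluing across the tree, and it is overcome essentially by the fattening $F(P_0)$ together with~\eqref{triang}; once that is in place, the infinite iteration works without further difficulty because the tree decomposition has adhesion $\le 3$ and the recursion proceeds along the levels of $T$ rooted at $t_0$.
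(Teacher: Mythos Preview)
Your proposal is correct and follows essentially the same route as the paper's proof: both apply the \kriz--Thomas decomposition, then inductively realise each torso $X^t$ as a minor of a suitable $Y^{f(t)}$, invoking \eqref{triang} to convert each triangle adhesion into an actual triangle of $P$ at which the clique-sum in $U$ can be taken. The only differences are organisational---the paper enumerates $V(T)$ linearly and handles adhesions lazily (returning to enlarge branch sets in $Y_j$ only when a child $t_i$ appears), whereas you batch all adhesions of a torso into your key lemma---and two points you should make explicit: condition~\ref{T i} (each torso is a $3$-simplex) is precisely what guarantees every adhesion triangle is facial, and the map $f$ cannot be defined in advance as your first paragraph suggests but must be built together with the minor, since the choice of $f(t')$ depends on where your lemma places the parent-adhesion clique inside the abstract copy of $P$ for $t'$.
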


Before proving this, we state the aforementioned result of \kriz\ \& Thomas.

\renewcommand{\theenumi}{(\Alph{enumi})}
\begin{theorem}[{\cite[Theorems~3.5~and~3.9]{KriThoCli}}] \label{kriz}
A countable graph \g is $K_5$-minor-free  if and only if \g has a supergraph $G^*$ which has a tree decomposition $(T,X)$ \st\ 
\begin{enumerate}
	\item \label{T i} $G[X^t]$ is a 3-simplex in $\Gamma$ \fe\ $t\in V(T)$, and
	\item \label{T ii} $X^s \cap X^t$ is a clique on at most 3 vertices \fe\ $s,t\in V(T)$.
\end{enumerate}
\end{theorem}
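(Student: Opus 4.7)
The plan is to apply \Tr{kriz} to the given countable $K_5$-minor-free graph $G$ to obtain a supergraph $G^*$ carrying a tree-decomposition $(T,X)$ whose torsos $G^*[X^t]$ are $3$-simplices in $\Gamma$ and whose adhesion sets $X^s\cap X^t$ are cliques on at most $3$ vertices. Since $G<G^*$, it suffices to show $G^*<U$. I will construct a minor embedding of $G^*$ into $U$ recursively along $T$. Root $T$ at some $r\in V(T)$, enumerate $V(T)$ in BFS order $r=t_0,t_1,\ldots$, and at step $i$ produce a node $\sigma(t_i)\in V(S)$ together with a minor embedding $\phi^{t_i}$ of $G^*[X^{t_i}]$ into $Y^{\sigma(t_i)}$, such that: (i) the $\phi^{t_j}$ have pairwise internally disjoint branch sets in $U$ (disjoint except where forced to share vertices by the identifications $c_{uv}$), and (ii) whenever $t_it_j\in E(T)$ with $j<i$, the embeddings $\phi^{t_i}$ and $\phi^{t_j}$ agree on the adhesion $K_{t_it_j}:=X^{t_i}\cap X^{t_j}$ after the identification $c_{\sigma(t_j)\sigma(t_i)}$. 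The base case is immediate: if $G^*[X^r]$ is planar, $\phi^r$ exists by universality of $P$; if $G^*[X^r]\cong W$, take $Y^{\sigma(r)}=W$ with $\phi^r$ the identity.

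For the recursive step, suppose $\sigma(t_i)$ and $\phi^{t_i}$ have been chosen, and a child $s$ of $t_i$ in $T$ is to be processed, with adhesion $K:=X^{t_i}\cap X^s$. If $|K|\le 2$, then $\phi^{t_i}(K)$ is already a vertex or edge of $Y^{\sigma(t_i)}$, and by the labelling rule of \Dr{def U} we can pick a fresh neighbour $\sigma(s)$ of $\sigma(t_i)$ in $S$ of the required colour (blue if $G^*[X^s]$ is planar, red if $G^*[X^s]\cong W$) so that $c_{\sigma(t_i)\sigma(s)}$ glues $\phi^{t_i}(K)$ onto a chosen vertex or edge of $Y^{\sigma(s)}$ that serves as the image of $K$ inside a minor embedding of $G^*[X^s]$ into $Y^{\sigma(s)}$; such a minor embedding exists by universality of $P$ (resp.\ triviality, since $W<W$). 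The interesting case is $|K|=3$: since $W$ is triangle-free, both $G^*[X^{t_i}]$ and $G^*[X^s]$ are planar, so $Y^{\sigma(t_i)}=P=F(P_0)$ and $Y^{\sigma(s)}=P$. The obstacle is that $\phi^{t_i}(K)$ need not be an actual triangle of $P$ but only a minimal $K_3$-minor of $P_0$, whereas the gluing in $U$ happens along actual triangles. This is precisely the situation addressed by \eqref{triang}: I will shrink the three branch sets of $\phi^{t_i}(K)$ to a minimal $K_3$-minor $\Delta$ of $P_0$ whose cycle $C(\Delta)$ bounds, on some side $R$, a region free of the rest of $\phi^{t_i}$; \eqref{triang} then supplies an actual triangle $\Delta'\subset P$ inside $\cls R$ and three disjoint paths $P_1,P_2,P_3$ linking $\Delta'$ to the branch sets of $\Delta$. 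Extend the three relevant branch sets of $\phi^{t_i}$ along $P_1,P_2,P_3$ to reach $\Delta'$, and pick a fresh neighbour $\sigma(s)$ of $\sigma(t_i)$ in $S$ with $c_{\sigma(t_i)\sigma(s)}$ identifying $\Delta'$ with a triangle of $Y^{\sigma(s)}$ on which a minor embedding of $G^*[X^s]$ realizes $K$; the labelling rule of \Dr{def U} guarantees such a neighbour.

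The main difficulty will be organizing the recursion so that disjointness (i) is maintained globally. For a single extension step, the availability of two sides of $C(\Delta)$ and the freedom in choosing $R$ (as in the warm-up before the proposition) let us avoid the already-used part of $Y^{\sigma(t_i)}$. For the different children of a common $t_i$, one must allocate to each child its own attachment region in $Y^{\sigma(t_i)}$: since there are only countably many children and $P$ has countably many pairwise face-disjoint regions available around each minimal $K_3$-minor, this can be arranged greedily at the planning stage by splitting $Y^{\sigma(t_i)}$ into face-separated zones, one per child, before invoking \eqref{triang} inside each zone. Descendants in distinct subtrees are then automatically disjoint in $U$ because they live in distinct nodes of $S$ and the corresponding $Y^{\sigma(\cdot)}$ share vertices only along the identified cliques. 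Taking the union of the $\phi^{t_i}$ over all $i$ yields the desired minor embedding $G^*<U$, whence $G<U$.
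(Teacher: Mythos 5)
Your proposal does not prove Theorem~\ref{kriz}; it \emph{uses} it. The statement at hand is a reformulation of the \kriz--Thomas decomposition theorem, and what needs to be justified is that the stated form follows from Theorems~3.5 and~3.9 of \cite{KriThoCli}. The paper's own proof is the Remark immediately following the theorem: Theorem~3.9 of \cite{KriThoCli} lifts the finite case of Wagner's theorem (every finite $K_5$-minor-free graph is a subgraph of a $3$-complex over $\Gamma$) to countable graphs, so $G$ itself is a subgraph of a $3$-complex over $\Gamma$; the implication $(\mathrm{ii})\to(\mathrm{iii})$ of Theorem~3.5 then produces a tree-decomposition over $\Gamma$ with adhesion sets of size at most~$3$, satisfying (T4) and (T5); defining $G^*$ by completing each adhesion set gives condition~\ref{T ii}, and condition~\ref{T i} holds because each $X^t$ is induced by a maximal clique of the auxiliary graph $H^*$ of \cite{KriThoCli}, whose condition~$(*)$ asserts that it is a $3$-simplex.

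What you have written is instead a sketch of the proof of Proposition~\ref{prop U}, i.e.\ that $G<U$ holds for every countable $K_5$-minor-free graph $G$: it invokes Theorem~\ref{kriz} as a black box to obtain $(T,X)$ and $G^*$, and then recursively realises the torsos as minors of copies of $P$ and $W$ inside $U$, using~\eqref{triang} for the triangle adhesions --- which is broadly the structure of the paper's proof of Proposition~\ref{prop U}. But your text says nothing about where the decomposition $(T,X)$ with properties \ref{T i} and~\ref{T ii} comes from, which is the entire content of the statement in question; so as a proof of Theorem~\ref{kriz} it is vacuous.
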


\begin{remark}
\Tr{kriz} is not formulated in \cite{KriThoCli} exactly as stated here, so let me explain how our formulation follows from the results of \cite{KriThoCli}. A \defi{3-complex} over $\Gamma$ is a graph \g such that every 3-simplex contained in \g belongs to $\Gamma$. By \cite[Theorem~3.9]{KriThoCli}, if every finite subgraph of \g is a subgraph of a 3-complex over $\Gamma$ ---which is the case by Wagner's aforementioned theorem--- then so is \G. 
Implication $(ii) \to (iii)$ of \cite[Theorem~3.5]{KriThoCli} then says that $G$ admits a tree decomposition $(T,X)$ over $\Gamma$ with adhesion sets of size at most 3 (condition (T4)), and that after making $X^s \cap X^t$ complete both $X^s, X^t$ remain in $\Gamma$ (condition (T5)). We let $G^*$ be the graph obtained from \g after making each adhesion set of  $(T,X)$ complete, and so \ref{T ii} is satisfied. Condition \ref{T i} is satisfied too by the construction of $(T,X)$: each $X^t$ is induced by a maximal clique of an auxiliary graph $H^*$, and condition $(*)$ of  \cite{KriThoCli} says that it is a 3-simplex of $G$.
\end{remark}

\begin{proof}[Proof of \Prr{prop U}]
Apply \Tr{kriz} to obtain the tree decomposition $(T,X)$. We may assume \obda\ that $G=G^*$, because the latter is $K_5$-minor-free too as a consequence of this decomposition. In particular, we may assume that \g is connected. 

Let $\seq{t}$ be an enumeration of $V(T)$ \st\ $T[\{t_0, \ldots, t_k\}]$ is connected \fe\ $k\in \N$. We will inductively embed each $X_i:= X^{t_i}$ as a minor $H_i$ of some torso $Y_i=Y^{s_i}$ of the above tree decomposition $(S,Y)$ of $U$, using \ref{triang} to ensure that adhesion sets are embedded as common sub-minors. The branch sets of each 
$H_i$ will be either fixed or enlarged in subsequent steps, and so we will obtain a minor of \g in $U$ in the limit $i\to \infty$. 

The first step $i=0$ is easy: pick a torso $Y_0$ of $(S,Y)$ which is isomorphic to $W$ if $X_0$ is, and isomorphic to $P$ if $X_0$ is planar. Let $H_0$ be a minor of $Y_0$ isomorphic to $X_0$; in the former case this is trivial, and in the latter it is possible because $P_0$ is a universal planar graph. We insist that $H_0$ be a minor of the $P_0$ subgraph of $Y_0$ in this case, i.e.\ $H_0$ uses no tips of $P=F(P_0)$.

For the inductive step, assume that step $i-1$ concluded with a minor of $\bigcup_{k< i} X_k$ in $U$, contained in the union of finitely many torsos of $(S,Y)$, whereby we have defined a minor $H_k$ of some $Y_k$ isomorphic to $X_k$ \fe\ $k<i$. Let $t^j$ be the unique neighbour of $t^i$ in $T$ \st\ $j<i$, the existence and uniqueness of which is guaranteed by the choice of \seq{t}. As above, we will choose a torso $Y_i=Y^{s_i}$  of $(S,Y)$ which is isomorphic to $W$ if $X_i$ is, and isomorphic to $P$ if $X_i$ is planar, and we will find  a minor $H_i$ of $Y_i$ isomorphic to $X_i$. But this time we have to make our choices carefully. 

We first consider the case where $X_i$ is planar, and $X_i \cap X_j$ is a triangle  $\Delta^o$. This case will be handled almost identically with the warm-up following \eqref{triang}. Let $H_i$ be a minor of $P_0\subset P$ isomorphic to $X_i$. Let $\Delta$ be the  triangle of $H_i$ corresponding to $\Delta^o$.  Recall that $X_i$ is a 3-simplex by \ref{T i}, and therefore $\Delta^o$ is facial in any embedding of $X_i$ into $\BS^2$. This implies that one of the regions $R$ into which $H_i$ separates  $\BS^2$ in our fixed embedding $f_P: P \to \BS^2$ is bounded by $\Delta$ and meets no other branch sets or branch-edges of $H_i$.   Thus, after choosing a subpath of each branch set of $\Delta$ to turn it into a minimal $K_3$ minor of $P$, we can apply \eqref{triang} to $\Delta$ and $R$ to obtain a triangle $\Delta'_i \subset P$ and disjoint paths $P_i, 1\leq i\leq 3$ joining its vertices to distinct brach sets of $\Delta$. Note that the $P_i$ are disjoint from any branch set and edge of $H_i$ not contained in $\Delta$, because they lie in $R$. We modify $H_i$ by enlarging each branch set of $\Delta$ by uniting it with its unique incident $P_i$. Now $H_i$ is still isomorphic to $X_i$, and each branch set of $\Delta$ contains a unique vertex of $\Delta'$.

Notice that in this case, i.e.\ when $X_i \cap X_j$ is a triangle, $X_j$ is planar too because $W$ has no triangles. Thus in step $j$, we defined an $X_j$ minor $H_j$ of some $Y_j \isom P$. By repeating the above arguments with $X_i$ replaced by $X_j$, we can find a triangle $\Delta'_{ji}\subset Y_j$, and enlarge the branch sets of a triangle $\Delta_{ji}$ of  $H_j$ if needed,  to ensure that they each contain exactly one vertex of $\Delta'_{ji}$. Note that it might be that $\Delta_{ji}=\Delta_{jk}$ for some $k<i$, in which case we choose $\Delta'_{ji}=\Delta'_{jk}$, and do not enlarge any branch sets.

We choose $Y_i$ to be a neighbour of $Y_j$ in $S$ ---more precisely, $s_i$ is a neighbour of $s_j$--- that has not been chosen as $Y_k$ in any of the previous steps $k$. There are infinitely many such $Y_i$ to choose from by the construction of $U$. In addition, we make this choice so that the attachment map dictated by the label $c(s^j s^i)$ attaches  
$Y_j$ to $Y_i$ by identifying $\Delta'$ with $\Delta'_{ji}$ so that two vertices get identified whenever they correspond to the same vertex of $\Delta^o$. 
Then, for each vertex $v$ of $\Delta^o$, we unite the corresponding branch sets $B_v^j,B_v^i$ of $H_j$ and $H_i$ into one branch set $B$, noting that $B$ is connected because  $B_v^j,B_v^i$ contain a common vertex in $\Delta'_{ji}$. We have thus extended our minor of $\bigcup_{k< i} X_k$ in $U$ inherited from step $i-1$ into a minor of $\bigcup_{k\leq i} X_k$. This concludes step $i$ in the case where $X_i$ is planar, and $X_i \cap X_j$ is a triangle.
\medskip

The other cases are simpler. If $X_i$ is planar, and $X_i \cap X_j$ is an edge $e=uv\in E(G)$, then let again $H_i$ be a minor of $P_0 \subset P$ isomorphic to $X_i$. Let $B^i_u,B^i_v$ be the branch sets of $H_i$, corresponding to $u$ and $v$, and pick a $B^i_u$--$B^i_v$~edge $e^i$ in $P$. Likewise, we let $B^{j}_u,B^{j}_v$ be the corresponding branch sets of $H_j$, and pick an edge $e^{j}$ joining them in $Y_j$, but if a $B^j_u$--$B^j_v$~edge $e^{k}$ was chosen in some previous step $k$ (because some $X_k$ also attaches to $X_j$ along $e^k$), then we let  $e^{j}=e^k$.  As above, pick an isomorphism $\iota^i: X_i \to P$, and choose $Y_i$ to be a neighbour of $Y_j$ in $S$ that has not been chosen as $Y_k$ in any of the previous steps $k$, and attaches to $Y_j$ by identifying $e^{i}$ with $e^j$ in the direction dictated by $\iota^i$. For each end-vertex $x$ of $e$, we unite $B^j_x$ and $B^i_x$ into one set, which set we declare to be our $i$th branch set of $x$.
\medskip

If $X_i$ is a copy of $W$, and $X_i \cap X_j$ is an edge, we proceed as in the previous case, with the obvious difference being that we choose $Y_i$ to be a copy of $W$ instead of $P$. 

Finally, if $X_i \cap X_j$ is a vertex $v\in V(G)$, then our task becomes easier. We proceed as above, and ensure that $Y_i$ attaches to $Y_j$ so that the branch sets corresponding to $v$ in $H_i$ and $H_j$ contain the unique common vertex of $Y_i,Y_j$, and we unite these two branch sets into one.
\medskip

Note that at each step $i$ we have associated to each $v\in V(\bigcup_{k\leq i} X_k)$ a disjoint branch set $B_v^i\subset U$, so that if $vu\in E(\bigcup_{k\leq i} X_k)$ then there is a $B_v^i$--$B_u^i$~edge in $U$. Moreover, $B_v^i$ is increasing with $i$. Thus the family of branch sets $\{\bigcup_{i\in \N} B_v^i \mid v\in V(G)\}$ defines a $G$ minor in $U$.
\end{proof}

Similarly to \Tr{kriz}, \kriz\ \& Thomas \cite{KriThoCli} extended another result of Wagner, saying that the $K_{3,3}$-minor-free graphs are exactly the 2-clique-sums of planar graphs and $K_5$. By imitating the proof of \Tr{thm K5} we can thus deduce

\begin{theorem} \label{thm K33}
The class of countable graphs with no $K_{3,3}$ minor has a \mue.
\end{theorem}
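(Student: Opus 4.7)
The plan is to mimic the proof of \Tr{thm K5} essentially step by step, with the role of the Wagner graph $W$ replaced by $K_5$, and with 3-sums replaced by 2-sums throughout. The starting point is the analogue of \Tr{kriz} proved by \kriz\ \& Thomas \cite{KriThoCli}, extending Wagner's theorem that every finite $K_{3,3}$-minor-free graph is a 2-clique-sum of planar graphs and copies of $K_5$: every countable $K_{3,3}$-minor-free graph \g has a supergraph $G^*$ admitting a tree decomposition $(T,X)$ into 2-simplices, each torso of which is either planar or isomorphic to $K_5$, and whose adhesion sets are cliques on at most 2 vertices.

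As in \Sr{sec K5}, I would let $P_0$ be a \lf\ universal planar graph provided by \Tr{thm DK} combined with \Lr{lem lf}, fix an embedding $f_0: P_0 \to \BS^2$, and set $P := F(P_0)$. Because adhesion sets now have size at most 2, in place of property \eqref{triang} I would only need the following much simpler edge-analogue: \emph{for every edge $e$ of $P_0$ and each of the two sides $R$ of the cycle consisting of $e$ alone (viewed as a degenerate minimal $K_2$ minor with trivial branch sets), there is an edge $e'\subset P$ contained in $\cls{R}$ together with two disjoint paths joining the endvertices of $e'$ to those of $e$.} This holds directly from the definition of $F(P_0)$: the fattening construction places a triangle on each side of $e$, and the tip on the $R$-side together with one of the two edges from the tip to an endvertex of $e$ yields the desired edge $e'$ and paths (which in the degenerate case are singletons). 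The slightly more general statement for minimal $K_2$ minors of $P_0$ with non-trivial branch sets reduces to this using the same chord-shortening argument as in the proof of \eqref{triang}.

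Next, I would define the universal graph $U$ in analogy with \Dr{def U}: take $S$ to be the regular tree of countably infinite degree, 2-colour its vertices so that each has infinitely many neighbours of each colour, assign to each vertex a copy of $P$ or $K_5$ according to its colour, and label the directed edges of $S$ by isomorphisms between cliques of size at most 2 of the torsos, ensuring that for every torso $Y^u$, every sub-clique $K$ of size at most 2, and every choice of isomorphism $h$ from $K$ into $P$ or $K_5$, infinitely many outgoing edges at $u$ realize $h$. Then $U$ is the graph obtained by identifying the relevant cliques according to this labelling, and $(S,Y)$ is by construction a $2$-decomposition of $U$ over the class $\{\text{planar}\} \cup \{K_5\}$.

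The universality proof then follows the scheme of \Prr{prop U}. Enumerate the tree $T$ of the \kriz--Thomas decomposition of \g as $t_0, t_1, \ldots$ so that initial segments induce subtrees, and inductively embed each torso $X_i$ as a minor $H_i$ of some torso $Y_i$ of $(S, Y)$. The base step and the cases where the adhesion set $X_i\cap X_j$ is a single vertex or empty are handled exactly as in the proof of \Prr{prop U}. The main case is when $X_i\cap X_j$ is an edge $e=uv$: using the edge-analogue of \eqref{triang} above (applied in both $Y_i$ and $Y_j$ on the appropriate face side, noting that $e$ is facial in any planar embedding of the planar torsos since each $X_k$ is a 2-simplex), one finds edges $e'_i\subset Y_i$ and $e'_j\subset Y_j$ positioned so that the branch sets in $H_i,H_j$ corresponding to $u,v$ each contain exactly one endpoint of these edges. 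One chooses $Y_i$ to be a neighbour of $Y_j$ in $S$ not previously used, whose labelling $c_{s_j s_i}$ identifies $e'_j$ with $e'_i$ compatibly with $u,v$; this is possible by the construction of $U$. Uniting the two pairs of branch sets then extends the minor to include $X_i$. Because $K_5$ contains edges in any position, the same argument covers the case where $X_i$ or $X_j$ is isomorphic to $K_5$.

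I do not expect any serious obstacle beyond what already appears in the $K_5$-free case; if anything, the $K_{3,3}$-free case is easier, because 2-sums along edges require matching only edges rather than triangles, rendering the analogue of \eqref{triang} essentially immediate from the fattening construction. The limit of the branch sets over $i\to\infty$ yields the desired $G$-minor in $U$, establishing that $U$ is $<$-universal for the class of countable $K_{3,3}$-minor-free graphs.
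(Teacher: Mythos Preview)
Your proposal is correct and follows the paper's approach: replace $W$ by $K_5$, restrict the attachment cliques to size at most~2, and rerun the argument of \Prr{prop U}. The paper adds one simplification you overlook: since only 2-sums occur, the fattening $P=F(P_0)$ and your edge-analogue of \eqref{triang} are unnecessary --- one may use $P_0$ itself, because (as the edge case in the proof of \Prr{prop U} already shows) any $B_u$--$B_v$ edge of $P_0$ can serve directly as the attachment edge.
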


A universal element can be obtained as in \Dr{def U}, except that we replace $W$ by $K_5$, and do not allow $K$ to be a triangle. (We may use $P_0$ instead of $P$ this time.)

\subsection{Bounding the maximum degree} \label{sec Delta}

Recall that every graph \g embeddable in a surface $\Sigma$ is a minor of a graph $H$ of maximum degree $\Delta(H)=3$ that also embeds in $\Sigma$. This property is not enjoyed by all minor-closed classes even if we increase the $\Delta(H)$ we allow. The class of stars $\forb{P_3}$ is an easy example, and one can prove that apex graphs provide another example. 

\begin{definition} \label{def D}
Given a minor-closed graph family $\cc$, define $\Delta(\cc)$ as the smallest cardinal $k$ \st\ \fe\ $G\in \cc$ \ti\ $H\in \cc$ with $\Delta(H)\leq k$ and $G<H$.
\end{definition}

As mentioned above, $\Delta(\cc)$ can be infinite. I am not aware of any studies of this parameter even for finite graphs. For which minor-closed classes  $\cc$  of finite graphs is $\Delta(\cc)$ finite? This question is perhaps too general to admit a concrete answer, but here is a more specific problem that I find interesting: 

\begin{conjecture} \label{conj Delta}
Every proper minor-closed  class of countable graphs is contained in such a  class \cc\ with finite $\Delta(\cc)$.
\end{conjecture}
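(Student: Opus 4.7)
The strategy is to reduce the conjecture to the single assertion $\Delta(\forb{K_\infty})<\infty$, and then attack that via a structure theorem.

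The reduction is immediate once one observes that every proper minor-closed class $\cd$ of countable graphs is contained in $\forb{K_\infty}$. Indeed, every countable graph is a subgraph, hence a minor, of $K_\infty$; so if $K_\infty \in \cd$, minor-closure would force $\cd$ to be the class of all countable graphs, contradicting properness. Thus, once one establishes $\Delta(\forb{K_\infty})<\infty$, the class $\cc := \forb{K_\infty}$ witnesses the conjecture for every proper minor-closed $\cd$; note that $\cc$ is itself proper, since $K_\infty \notin \cc$.

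To establish $\Delta(\forb{K_\infty})<\infty$, the plan is to follow the template of \Tr{thm K5} and \Cr{cor K5}: tree-decompose a given $G \in \forb{K_\infty}$ into well-behaved torsos, blow up each torso to bounded degree using \Lr{lem lf} (together with an analogue for pieces of bounded tree-width), and reassemble via clique-sums along the adhesions, as in \Dr{def U} and \Prr{prop U}. In place of \Tr{kriz} (Wagner--\kriz--Thomas for $K_5$), the required input is a Diestel--Thomas / Robertson--Seymour--Thomas structure theorem for $K_\infty$-minor-free countable graphs \cite{DieThoExc,RoSeThExcI,RoSeThExcII}: a tree-decomposition with finite adhesion sets whose torsos are either of bounded tree-width or (modulo boundedly many apex vertices) embed into a surface of bounded genus. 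Because the adhesions are of bounded size, the clique-sum reassembly preserves a bounded maximum degree, and the resulting graph $H$ inherits the original tree-decomposition, from which we would argue that $H$ stays in $\forb{K_\infty}$.

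The main obstacle is the structure theorem: the available decompositions for $K_\infty$-minor-free graphs are more technical than Wagner's and \Tr{kriz}, and the torsos may not a priori admit the clean bounded-degree refinements within $\forb{K_\infty}$ that the argument needs. Extracting a sufficiently ``blow-up-friendly'' version---with uniformly bounded adhesion and with torsos whose bounded-degree replacements remain $K_\infty$-minor-free---is the crux of the difficulty and is likely where most of the work lies. A secondary point is verifying $K_\infty$-minor-freeness of the reglued graph: any $K_\infty$ minor in $H$ would have infinitely many branch sets distributed across the tree-decomposition, and one must rule this out using the finite adhesion together with the tree structure, much as in the proof of \Prr{prop U}, but with the additional subtlety that the branch sets in question are now infinite.
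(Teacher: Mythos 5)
This statement is left open in the paper as Conjecture~\ref{conj Delta}; there is no paper proof to compare against, and your proposal does not close it either. Your opening reduction is, however, a correct and worthwhile observation: a proper minor-closed class of countable graphs omits $K_\infty$ and is therefore contained in $\forb{K_\infty}$, so $\Delta(\forb{K_\infty})<\infty$ would settle the conjecture with the single witness $\cc=\forb{K_\infty}$. In fact this is an \emph{equivalence}: applying the conjecture to $\cd=\forb{K_\infty}$ forces any witness $\cc$ to satisfy $\forb{K_\infty}\subseteq\cc\subseteq\forb{K_\infty}$, so the conjecture and the single assertion $\Delta(\forb{K_\infty})<\infty$ say the same thing. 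That is a clean focusing of the problem, worth recording.

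Beyond that, your proposal is a plan, not a proof, and the gap is real. The structure theorem you invoke yields torsos that are, after deleting boundedly many apex vertices, embeddable in a bounded-genus surface, and those apex vertices are exactly the high-degree culprits that must be dealt with. The paper's own remark in \Sr{sec Delta} that apex graphs have $\Delta(\cc)=\infty$ is a warning sign precisely here: replacing an apex vertex by an infinite bounded-degree branch set pushes the torso outside the apex-over-surface class, and whether the blown-up torso---let alone the reassembled clique-sum graph---remains $K_\infty$-minor-free is exactly what is missing. The $K_5$ and $K_{3,3}$ cases of \Sr{sec K5} go through because Wagner's theorem and \Tr{kriz} give torsos with no apex vertices at all, so \Lr{lem lf} alone bounds the torso degrees; nothing in \Lr{lem lf}, \Dr{def U}, or \Prr{prop U} addresses an apex vertex of infinite degree. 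Your plan therefore needs an analogue of \Lr{lem lf} for apex-over-surface graphs within $\forb{K_\infty}$, and that is the genuinely new idea this conjecture is asking for.
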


\defi{Proper} here means that \cc\ does not contain all countable graphs.

Perhaps the finiteness of $\Delta(\cc)$ can be used to formalise the idea that we do not need apex vertices to describe the graphs in \cc\ in structure theorems in the spirit of Robertson \& Seymour \cite{GM17}. 

\medskip
In the rest of this section we prove that $\Delta(\cc)$ is finite for the $K_5$-minor-free and $K_{3,3}$-minor-free graphs.

\begin{proposition} \label{prr 22}
The universal $K_5$-minor-free graph of \Tr{thm K5} can be chosen to have maximum degree 22.
\end{proposition}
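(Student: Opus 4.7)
The plan is to modify the construction of $U$ from \Dr{def U} so that $U$ itself has maximum degree at most 22, while remaining universal for countable $K_5$-minor-free graphs. The key observation enabling this is that in a minor $G < U$ a vertex of $G$ need not correspond to a single vertex of $U$; it may be realised as a branch set spreading across many torsos. Hence $U$ having bounded degree is compatible with covering graphs $G$ of unbounded degree.

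First, we make the building blocks small. By \Lr{lem lf} we may assume $P_0$ is sub-cubic, and then a direct inspection of \Dr{def fat} yields $\Delta(P) \le 9$: a vertex $v \in V(P_0)$ of degree $d \le 3$ retains its $d$ original edges and gains $2d$ tip-neighbours (two per incident edge), while each tip has degree $4$. Moreover $\Delta(W) = 3$.

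Second, we tame the amalgamation of \Dr{def U} by attaching only a bounded number of children at each clique $K$ of each torso $Y^u$ ---say, one per isomorphism type. The ``infinite supply'' of attachments required for universality is provided instead by descending along $S$: each such $K$ is identified with an isomorphic clique $K' \subset Y^w$ in every child $Y^w$ attached along $K$, so any further attachment at $K$ that the universality argument demands may be made at $K'$, or at a yet deeper descendant. The inductive step of \Prr{prop U} accommodates this by enlarging the relevant branch set $B_v$ of a vertex $v$ whose image lies in $K$: $B_v$ simply absorbs the image of $v$ in each successive descendant torso visited during the walk, which keeps the branch sets connected.

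Third, we bound the degree of $U$. Each vertex $w \in V(U)$ lies in one primary torso (contributing at most $9$ if it is a copy of $P$, or at most $3$ if it is $W$) plus a bounded number of additional torsos clique-summed at cliques $K \ni w$; each additional $Y^u$ contributes $\deg_{Y^u}(w) - (|K|-1)$ to the degree of $w$, which is at most $9 - (|K|-1)$ when $Y^u \cong P$ and at most $3 - (|K|-1)$ when $Y^u \cong W$ ($W$ being forbidden along triangle cliques since it is triangle-free). Distributing the attachments so that each vertex $w$ participates, in addition to its primary copy of $P$, in one extra copy of $P$ glued along a triangle and two extra copies of $W$ glued along the singleton clique $\{w\}$, we obtain the bound $9 + 7 + 3 + 3 = 22$.

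The main obstacle will be to verify that this particular distribution of attachments is simultaneously (i) rich enough to support the universality argument of the second paragraph ---every clique-sum demanded by the tree decomposition of \Tr{kriz} of an arbitrary countable $K_5$-minor-free graph must still be realisable in the modified $U$--- and (ii) sparse enough to respect the bound $22$. The delicate bookkeeping is to show that the walk through descendants always reaches the required attachment type at bounded local cost to the degree of any fixed vertex.
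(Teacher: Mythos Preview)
Your outline diverges from the paper's approach, and the divergence is precisely where the gap lies. The paper does \emph{not} attempt to bound the number of torsos meeting at a vertex of $P$; instead, it grows a \emph{$k$-pilar} $K_k \times R$ out of every clique $K$ of $P$ (and of $W$), and performs all clique-sums along fresh rungs of these pilars, never along $K$ itself. Each rung is used once, so no vertex sees more than a bounded number of attachments, and contracting each ray $R$ recovers the original $U$, so universality is retained for free. The bound $22$ then comes from $d_P(v)\le 9$ plus one $1$-pilar, at most three $2$-pilars (only along edges of $P_0$), and at most nine $3$-pilars.

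Your scheme, by contrast, proposes to cap the number of children at each clique and recover the ``infinite supply'' by descending through $S$: the clique $K\subset Y^u$ reappears as $K'\subset Y^w$, so further sums can be taken there. But $K'=K$ as a subset of $U$ ---the identification is what the clique-sum \emph{is}--- so every vertex of $K$ lies in the entire chain $Y^u,Y^w,Y^{w'},\ldots$, and its degree is unbounded. If instead you intend $K'$ to be a \emph{different} triangle of $Y^w$ reachable from $K$ by disjoint paths (so that branch sets can be extended), you are effectively rebuilding a pilar inside $Y^w$, and you would need a strengthening of \eqref{triang} guaranteeing that the triangle $\Delta'$ it produces is always one at which a child has actually been attached; \eqref{triang} as stated gives no such control. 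Your degree count $9+7+3+3$ also looks reverse-engineered: you never explain why exactly one $P$-child per vertex along a triangle suffices (a vertex of $P_0$ lies in up to nine triangles, any of which \eqref{triang} might select), nor how $2$-sums with $W$ along an edge are handled. You yourself flag this bookkeeping as ``the main obstacle'' ---and indeed it is the whole proof. The pilar construction is the missing idea.
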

\begin{proof}
We will alter the construction of $U$ in \Dr{def U} so as to never glue two copies of $P$ or $W$ directly; instead, we expand $P$ and $W$ by adding new cliques that we use to form 3-sums without increasing vertex degrees too much. 

\medskip
Let us make this precise. A \defi{$k$-pilar} is a graph of the form $K_k \times R$ where $\times$ stands for the cartesian product, and $R$ is a ray. Its \defi{base} is the subgraph $K_k \times \{r\}$ where $r$ is the first vertex of $R$.

Let $P^\square$ be the graph obtained from $P$ by introducing a  $k$-pilar $R_K$ for every clique $K$ of $P$ of size at most $3$, identifying the base of $R_K$ with $K$. 

Recall that $P$ was defined as $F(P_0)$, where $P_0$ is any universal planar graph. By \Lr{lem lf}, we may assume $\Delta(P_0)=3$. This implies that  $\Delta(P)=9$ by the construction of $F(P_0)$. Moreover, by subdividing each edge of $P_0$ if needed, we may assume that $P_0$ has no $K_3$ subgraph. We claim that 
\labtequ{Psq}{$\Delta(P^\square)\leq 28$.}
Indeed, each pilar we attach to a vertex $v$ of $P$ increases its degree by 1. Note that $v$ is incident with exactly one $1$-pilar of $P^\square$, and at most $d(v)\leq 9$ $2$-pilars. Let us bound the number $3$-pilars.
Since $P_0$ has no $K_3$ subgraph, the closed neighbourhood of $v$ in $P$ is isomorphic to a wheel by construction (recall \Prr{pr fat}), and therefore $v$ lies in exactly 9 triangles of $P$.   Thus $v$ is incident with at most nine $3$-pilars. This gives $d_{P^\square}(v)\leq 9+1+9+9=28$ as claimed. 


We define $W^\square$ similarly to $P^\square$, starting with $W$ instead of $P$. Easily, $\Delta(W^\square)< 28$.

We change the construction of $U$ in \Dr{def U} in two ways. Firstly, instead of letting each torso $Y_i$ of $S$ to be a copy of $P$ or $W$, we use  $P^\square$ or  $W^\square$. Secondly, whenever we previously attached a clique $K$ of $Y_i$ to a clique $K'$ of $Y_j$, we now attach a copy of $K$ from its pilar to a copy from the pilar of $K'$, whereby we do not allow any of these copies to be the base of the pilar. There are infinitely many copies to choose from, and we make those choice in such a way that each copy is only used once. This can easily be achieved e.g.\ by using a bijection between $V(S) \times V(S)$ and the vertices of the ray $R$ used in the definition of the pilars. The resulting graph $U^\square$ thus satisfies $\Delta(U^\square)\leq 28$ by construction, the bottleneck being \eqref{Psq}. 

It remains to check that $U^\square$ is $K_5$-minor-free universal, and this is true because $U< U^\square$. Indeed, by contracting each copy of $R$ in each pilar of our construction we obtain $U$ as a minor of $U^\square$.

\medskip
In fact, a closer inspection of the proof of \Prr{prop U} reveals that we can save a little. It suffices to only form 2-sums of original edges of $P_0 \subset P$ in the construction of $U$. Thus we only need to attach 2-pilars to those edges, saving a factor of 6.
\end{proof}

A universal $K_{3,3}$ graph with bounded maximum degree can be constructing along the same lines. Since we do not need 3-sums in this case, we can define $U^\square$ starting with $P=P_0$, and only attaching 1- and 2-pilars. The resulting upper bound on the degrees of $U^\square$ is $4+1+4= 9$, the bottleneck being $K_5^\square$ rather than $P^\square$.

Thus we have proved \Cr{cor K5}.

\begin{question}
What are the optimal upper bounds on the maximum degree in \Cr{cor K5}?
\end{question}

\section{Further remarks and questions} \label{sec Qs} 

\subsection{Forbidding some finite graph componentwise} \label{sec Kn} 

Diestel \& K\"uhn \cite{DieKuhUni} asked whether $Forb(K_n)$ has a \mue. The cases $5<n<\infty$ remain open, and our proof of \Tr{thm K5} suggests that they may be hard to answer. Let us suppose that $Forb(K_n)$ has a \mue\ $F_n$ \fe\ $n$. Then $\bigcup_{n\in \N} F_n$ would be universal for the class \cf\ of graphs \g \st\ no component of \g contains all finite graphs as minors. It would be interesting to try to prove this directly:

\begin{conjecture} \label{conj cf}
The class \cf\ has a \mue.
\end{conjecture}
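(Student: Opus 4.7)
The plan is to reduce to the existence of a $<$-universal element $F_n$ of $\forb{K_n}$ for each finite $n$, and set $U:=\dot{\bigcup}_{n\in\N}\omega\cdot F_n$. Then $U\in \cf$, since each of its components is $K_n$-minor-free for some $n$. Conversely, for $G\in \cf$ with components $\{C_j\}_j$, pick $n(j)$ with $K_{n(j)}\not< C_j$ -- this exists because if $K_n$ were a minor of $C_j$ for every $n$ then every finite graph would be, as any finite graph on $k$ vertices embeds into $K_k$ -- and map each $C_j$ into a fresh copy of $F_{n(j)}$ by universality. This exhibits $G<U$.

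The substantive task is therefore constructing $F_n$ for $n\geq 6$; the cases $n\leq 5$ are covered by \Tr{thm K5} and earlier results. I would try to imitate the architecture of the proof of \Tr{thm K5}. The Robertson-Seymour structure theorem for finite $K_n$-minor-free graphs provides a tree-decomposition into torsos each of which is a near-embedding in a surface of bounded genus, modulo a bounded apex set and bounded vortices. Assuming an analogous theorem for countable graphs in the spirit of \Tr{kriz}, I would let $\Gamma_n$ be a countable family of basic $K_n$-minor-free torsos built by combining the universal surface-embeddable graphs of \Tr{thm CS} with apex-vertex and vortex gadgets; then $F_n$ would be assembled as a tree amalgamation of countably many copies of members of $\Gamma_n$ along adhesion sets of bounded size and with sufficient labelling redundancy, in the style of \Dr{def U}. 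Universality would be verified along the lines of \Prr{prop U}, embedding the torso-decomposition of a given $G\in\forb{K_n}$ piece-by-piece and aligning adhesion sets through common sub-minors via an analogue of \eqref{triang}.

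I anticipate two obstacles. First, an analogue of \Tr{kriz} for $K_n$-minor-free countable graphs with $n\geq 6$ is, to my knowledge, not in the literature: Wagner's clean decomposition no longer applies, and lifting the full Robertson-Seymour structure theorem to the infinite setting is likely to require substantial additional work. Second, and more critically, the analogue of \eqref{triang} is the principal technical bottleneck even granted the structural input: in the planar case we used the fattening $F(P_0)$ from \Dr{def fat} to produce triangles inside arbitrary facial regions, and it is unclear that an analogous gadget for adhesion sets of size $\geq 4$ on surfaces of positive genus can be manufactured without destroying universality of the basic torsos.

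A possible alternative exploits the slack of $\cf$ over $\bigcup_n \forb{K_n}$: components of $U$ are only required to be $K_m$-minor-free for \emph{some} $m$, possibly depending on the component. One could hope to show that every countable connected $K_n$-minor-free graph is a minor of some countable $K_{f(n)}$-minor-free graph of much simpler global structure -- say of bounded tree-width -- and then use a universal bounded-tree-width graph (in the spirit of K\"uhn's result \cite{KuhMin}) to build $U$. I do not, however, see an easy reason why such a uniform reduction should hold, and suspect this route is no easier than the first.
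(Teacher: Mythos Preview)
The statement you are attempting to prove is labelled \emph{Conjecture} in the paper, and the paper does \emph{not} prove it. It is presented as an open problem. So there is no ``paper's own proof'' to compare against.

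That said, your reduction in the first paragraph is exactly what the paper records just before stating the conjecture: if each $\forb{K_n}$ has a \mue\ $F_n$, then $\dot{\bigcup}_n F_n$ (or your $\dot{\bigcup}_n \omega\cdot F_n$) is universal for $\cf$. The paper then explicitly says ``The cases $5<n<\infty$ remain open'' for the Diestel--K\"uhn question on $\forb{K_n}$, and phrases \Cnr{conj cf} as something it would be interesting to prove \emph{directly}, i.e.\ without first settling the $\forb{K_n}$ question for all $n$. Your proposal does not do this: it routes through the very question the paper flags as open.

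Your honest assessment of the obstacles is accurate, and the paper agrees. It mentions the Graph Structure Theorem route and poses the supporting \Prb{ksums} (closure of \mue-classes under $k$-sums) as open; it also suggests looking at geometrically defined hierarchies such as the Colin de Verdi\`ere invariant or linkless embeddability (\Qr{Q LE}) as alternative sequences $\seq{\cc}$ that might cover all finite graphs. None of this is carried through. In short: your plan is reasonable and aligned with the paper's own speculation, but it is not a proof, and the paper does not claim one either.
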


It is not hard to prove that $\cf= \forb{X}$ where $X$ is the graph obtained from $\{K_n \mid \nin\}$ by picking one vertex from each $K_n$ and identifying them. 

To prove \Cnr{conj cf} it would suffice to find a sequence \seq{\cc} of minor-closed classes with \mue s $U_n$ \st\ every finite graph belongs to some $\cc_n$ using the above idea. Candidates for such sequences can be sought among geometrically defined classes of graphs. One example is letting $\cc_n$ be the class of graphs of `dimension' $d=n$, where $d$ is e.g.\ the Colin de Verdi\`ere invariant or the paremeter $\sigma$ of van der Holst \& Pendavingh \cite{HolPenGra}. As a first  (or rather second) step in this direction one can ask 

\begin{question} \label{Q LE} 
Do the countable linklessly-embeddable graphs have a \mue?
\end{question}

For the definition of linklessly-embeddable graphs, and alternative characterizations of this family, we refer to \cite{RoSeThoSac}. I expect that these characterizations remain equivalent in the infinite case. 

Another approach for attacking \Cnr{conj cf} would be to try to employ the Graph Structure Theorem of Robertson \& Seymour \cite{GM17}. This raises the following question, for which our results of \Sr{sec K5} provide support:
\begin{problem} \label{ksums}
Are the families with \mue s closed under $k$-sums \fe\ $k$?
\end{problem} 
We remark that if $\cc$ has a \mue\ $U$, then $\mathrm{Apex(\cc)}$, i.e.\ the class of cones over graphs in \cc, has the cone over $U$ as a universal element. 

\comment{
	\subsection{Finite graphs} \label{sec fin}

The question of what is the smallest size of a graph containing every planar $n$-vertex graph as a subgraph has attracted considerable attention, see \cite{BCEGS,EsJoMoSpa} and references therein. I am not aware of any studies of the analogous question when replacing `subgraph' by `minor'. Given the fact that the planar graphs have a \mue\ but no subgraph-universal one, the following question arises naturally.

\begin{question}
What is the smallest size $S(n)$ (measured either in terms of vertices or edges) of a planar graph $G_n$ containing every planar graph of size $n$ as a minor?
\end{question}

Part of my motivation for this question lies in the extremal objects $G_n$: they might have an interesting geometry or converge to interesting limits.

	If we drop the requirement that $G_n$ be planar itself, then size $S(n)= \Theta(n)$ suffices. This is because every planar graph of size $n$ is a minor of a sub-cubic planar graph  of size $\Theta(n)$ by the construction in \Lr{lem lf}, and Capalbo \cite{CapSma} constructed a graph of size $O(n)$ containing each planar sub-cubic graph of size $n$ as a subgraph. It would still be interesting to compute a tight constant.
}
\subsection{\Cof\ graphs} \label{sec cof}

We say that a minor-closed graph class \cc\ is \defi{\finy}, if $\cc = \forb{S}$ for a set $S$ of finite graphs (which can be chosen to be finite by the \RSTc). Note that a graph \g belongs to such a class \cc\ \iff\ every finite minor of \g does. 
Call an infinite graph \g \defi{\cof}, if $\cc_G:= \{H \mid H<G\}$ is \finy.

It follows from the definitions that 
\begin{observation}  \label{obs cof}
A countable graph $G$ is \cof\  \iff\ $G$ is a \mue\ of a \finy\ minor-closed class \cc\ (with 
 $\cc=\cc_G$).\qed
\end{observation}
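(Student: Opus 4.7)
The proposal is to derive both directions directly from the definitions, with the only substantive observation being that when $G$ is a \mue\ of a minor-closed class $\cc$, we necessarily have $\cc=\cc_G$.

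For the forward direction, assume $G$ is \cof, so by definition $\cc_G=\forb{S}$ for some set $S$ of finite graphs, hence $\cc_G$ is \finy\ by definition. Since $G<G$, we have $G\in \cc_G$, and every $H\in \cc_G$ is by the definition of $\cc_G$ a minor of $G$. Thus $G$ is a \mue\ of $\cc_G$, and the witnessing class is \finy.

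For the backward direction, assume $G$ is a \mue\ of a \finy\ class $\cc$. I would first check the set-theoretic identity $\cc=\cc_G$: the inclusion $\cc\subseteq \cc_G$ is exactly the defining property of a \mue\ (every element of $\cc$ is a minor of $G$), while $\cc_G\subseteq \cc$ follows because $\cc$ is minor-closed and contains $G$, so every minor of $G$ lies in $\cc$. Hence $\cc_G=\cc$ is \finy, meaning $G$ is \cof.

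So the only step to spell out is the identification $\cc=\cc_G$, and there is no real obstacle: the statement is a bookkeeping observation linking the two notions, which is why it is flagged as an observation with an immediate \qed.
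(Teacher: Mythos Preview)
Your proposal is correct and matches the paper's treatment: the observation is stated with an immediate \qed\ and no proof, so the paper regards it as following directly from the definitions, exactly as you have spelled out. The only point worth noting, the identity $\cc=\cc_G$, is precisely what you verified.
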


By the Graph Minor Theorem \cite{GMXX}, there are at most countably many countable \cof\ graphs. Let us show that there are infinitely many. The regular tree $G_0$ of countably infinite degree is a \mue\ of the class of forests $\cc_0=\forb{K_3}$. It follows that \fe\ \nin, the graph $G_n$ obtained from $G_0$ by adding $n$ new vertices and connecting them completely will all other vertices is universal for the $n$-apex forests. Another infinite family can be obtained similarly, starting with $G_0$ being a universal planar graph.

These examples combined with the discussion of \Sr{sec Delta} motivate
\begin{question}  \label{Q cof}
Is there an infinite set of \lf\ \cof\ graphs no two of which are minor-twins?
\end{question}

A famous problem of Thomas \cite{ThoWel} asks whether the countable graphs are well-quasi-ordered under $<$. The restriction to \cof\ graphs may be more accessible:

\begin{problem}  \label{Q cof}
Are the countable \cof\ graphs well-quasi-ordered under $<$?
\end{problem}

\subsection{Addable classes} \label{sec add}

Recall from the introduction that  the class $\ce_\Sigma$ of countable graphs that embed in a closed surface $\Sigma \not\isom \BS^2$ has no \mue, but the class $\ce_\Sigma^*$ of graphs with components in  $\ce_\Sigma$ does. This section attempts to use these facts as a lesson to come up with a more concrete version of \Qr{Q DK}.

\medskip
A graph class \cc\ is called \defi{addable}, if $G,H\in \cc$ implies $G \dot{\cup} H \in \cc$. We say that \cc\ is \defi{\sig-addable}, if $G_n \in \cc$ for \nin\ implies $\dot{\bigcup}_{\nin} G_n \in \cc$.

\begin{proposition} \label{pr add}
A \finy\ minor-closed graph class \cc\ is addable \iff\ each $F\in \ex{\cc}$ is connected. 
An arbitrary minor-closed graph class \cc'\ satisfying $\cc=\forb{\ex{\cc'}}$\footnote{See \Sr{sec min} to recall how this could fail.} is \sig-addable \iff\ each $F\in \ex{\cc'}$ (is connected or) has a connected minor-twin. \qed
\end{proposition}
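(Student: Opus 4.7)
The plan is to reduce everything to one elementary observation: a connected graph $F$ that is a minor of $\dot{\bigcup}_{n} G_n$ is automatically a minor of some single $G_n$. The reason is that each branch set is connected and therefore lives in a single component, and any edge joining two branch sets forces both branch sets into the same component, so the entire $F$-minor sits inside one $G_n$. I will pair this with the identity $\cc = \forb{\ex{\cc}}$, which is automatic in the \finy\ case: any putative infinite $F \in \ex{\cc}$ would contain a finite minor $F_0$ witnessing that some graph in $\ex{\cc}$ is a minor of $F$, and because $F_0$ is finite while $F$ is not, $F_0 < F$ would be a strict minor, forcing $F_0 \in \cc$, a contradiction. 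So for part (i) I can restrict attention to finite $F \in \ex{\cc}$.

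For part (i), the forward direction takes such a finite $F \in \ex{\cc}$, writes $F = F_1 \dot{\cup} F_2$ with both parts non-empty, observes that each $F_i$ is a strict (vertex-smaller) minor of $F$ hence in $\cc$, and then uses addability to conclude $F = F_1 \dot{\cup} F_2 \in \cc$, a contradiction. For the converse, if every $F \in \ex{\cc}$ is connected and some $G, H \in \cc$ had $G \dot{\cup} H \notin \cc$, then the identity $\cc = \forb{\ex{\cc}}$ would give some $F \in \ex{\cc}$ with $F < G \dot{\cup} H$, and the opening observation would put $F$ inside $G$ or inside $H$, contradicting $G, H \in \cc$.

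Part (ii) runs in the same spirit, simply allowing countably many components. Forward: if $F \in \ex{\cc'}$ is disconnected with no connected minor-twin, then no component $C_i$ of $F$ (all of which are connected) is a minor-twin of $F$; since $C_i < F$ holds trivially, we must have $F \not< C_i$, so by the minimality clause in the definition of $\ex{\cc'}$ each $C_i \in \cc'$, and $\sig$-addability forces $F = \dot{\bigcup}_i C_i \in \cc'$, a contradiction. Backward: given $G_n \in \cc'$ with $\dot{\bigcup}_n G_n \notin \cc'$, the hypothesis $\cc' = \forb{\ex{\cc'}}$ yields some $F \in \ex{\cc'}$ with $F < \dot{\bigcup}_n G_n$; if $F$ is connected the opening observation kills it, while if $F$ has a connected minor-twin $F'$, then $F' < \dot{\bigcup}_n G_n$ gives $F' < G_n$ for some $n$ by the opening observation, and transitivity with $F < F'$ promotes this to $F < G_n$, again contradicting $G_n \in \cc'$. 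I do not see any real obstacle here; the proposition is essentially a matter of unpacking the definitions around the single lemma on connected minors of disjoint unions, with the minor-twin condition in (ii) precisely compensating for the fact that a disconnected $F$ need not have its components be strict minors of itself.
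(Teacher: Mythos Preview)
Your argument is correct. The paper does not actually prove this proposition: it carries a \qed\ symbol in the statement, and the text immediately afterwards says ``Variants of this for finite graphs are well-known. The proof is an exercise that we leave to the reader.'' So there is no proof in the paper to compare against, and your write-up is exactly the kind of routine verification the authors had in mind.

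One small point of phrasing: when you justify ``$\cc = \forb{\ex{\cc}}$ is automatic in the \cof\ case'', what you actually argue is that every $F\in\ex{\cc}$ is finite (via a finite $F_0$ in the forbidden set $S$, not in $\ex{\cc}$ as you wrote). That alone does not give the identity; you still need that any $X\notin\cc$ has some $F\in\ex{\cc}$ below it, which follows because $X\notin\cc$ yields a finite $F_0\in S$ with $F_0<X$, and among finite graphs not in $\cc$ below $F_0$ one can descend to a minor-minimal one. This is trivial to add and does not affect the correctness of your overall approach.
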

Variants of this for finite graphs are well-known.
The proof is an exercise that we leave to the reader. To see that the restriction to \finy\ classes is essential in the first statement, consider the class \cc\ of graphs embeddable in any closed surface. This \cc\ is addable (but not \sig-addable), but $\ex{\cc}$ contains 
$\omega \cdot K_5$, which has no connected minor-twin. As another example, let $\cu$ be the class of graphs with at most one cycle. It is easy to see that \cu\ has no \mue, and that it is not addable. But $\cu^*$ has one: a disjoint union of cycles of all possible lengths, with copies of the \oo-regular  tree attached to each of their vertices.

\medskip
Motivated by the above discussion, and \Cr{cor}, we propose
\begin{problem} \label{fin add}
Let $F_1,\ldots, F_k$ be finite connected graphs. Must  $\forb{F_1,\ldots, F_k}$ have a \mue?
\end{problem} 

In other words, the question is whether every \finy, addable, minor-closed class has a \mue. 

\mymargin{how about $K_5 \vee K_5$ etc.?}

We remark that the \finy\ classes of countable graphs are in one-to-one correspondence with classes of finite graphs. Our final problem emphasizes the idea that some questions about infinite universal graphs can be thought of as questions about finite graphs: 

\begin{problem} \label{decide}
Let $F_1,\ldots, F_k$ be finite graphs. Is the problem of deciding whether $\forb{F_1,\ldots, F_k}$ has a \mue\  ---equivalently, the problem of whether there is a countable graph \g with $\ex{G}= \{F_1,\ldots, F_k \}$---  algorithmically decidable?
\end{problem}

\comment{
	\begin{lemma} \label{lem 3con}
Every countable planar graph is a minor of a \lf, 3-connected, planar graph.
\end{lemma}
\begin{proof}
Let \g be a countable planar graph. If \g is disconnected we can add edges between its components to make it connected, so let us assume that it is.  By \Lr{lem lf}, we may also assume \g is \lf. We claim that the graph $G^\otimes$ as in \Dr{def otimes} is 3-connected. Indeed, given $x,y\in  V(G)$, and a \pth{x}{y}\ $P$ in $\G$, notice that $P$ is blown up into a triple of independent \pth{x}{y}s in $G^\otimes$. For other couples of vertices $x,y\in  V(G^\otimes)$ a triple of independent \pth{x}{y}s can be found similarly.
\end{proof}
}


\bibliographystyle{plain}
\bibliography{collective}

\end{document}